\newtheorem{theorem}{Theorem}[section]
\newtheorem{proposition}[theorem]{Proposition}
\newtheorem{corollary}[theorem]{Corollary}
\theoremstyle{definition}
\newtheorem{definition}[theorem]{Definition}
\newtheorem{example}[theorem]{Example}
\newtheorem{remark}[theorem]{Remark}
\numberwithin{equation}{section}
\begin{document}
\setcounter{page}{1}

\title[Some results]{{Structural properties of }$2$\textbf{-}$C${-normal
operators}}

\author[Messaoud Guesba, Ismail Lakehal and
Sid Ahmed Ould Ahmed Mahmoud]{Messaoud Guesba$^{1}$, Ismail Lakehal$^{2}$ and  Sid Ahmed Ould Ahmed Mahmoud
$^{3}$}

\address{ $^{1,2}$ Department of Mathematics, Faculty of Exact Sciences,\\
El Oued University, 39000 Algeria\\
 }
\email{guesba-messaoud@univ-eloued.dz, guesbamessaoud2@gmail.com \ \ \\
ismaillakehal28@gmail.com}
\address{$^{c}$Mathematics Department, College of Science. Jouf University\\
Sakaka 2014. Saudi Arabia\\}
\email{sidahmed@ju.edu.sa}

\keywords{: $2$-$C$-normal operators, $C$-normal, conjugation opera-
tor.}
\subjclass[2010]{ 47A05, 47A55, 47B15.}

\begin{abstract}

In this paper, we introduce and study a new class of bounded linear operators on complex Hilbert spaces, which we call \textit{2-\(C\)-normal operators}. This class is inspired by and closely related to the notion of 2-normal operators, with additional structure imposed via conjugation. Specifically, a bounded linear operator \( S \) on a complex Hilbert space \( H \) is said to be \textit{2-\(C\)-normal} if there exists a conjugation \( C \) on \( H \) such that
\[
(SC)^2 (SC)^\# = (SC)^\# (SC)^2.
\]
We investigate various fundamental properties of 2-\(C\)-normal operators, including algebraic characterizations, spectral properties, and examples that distinguish them from classical normal and 2-normal operators. Additionally, we explore how this class behaves under common operator transformations and examine its relation to other well-studied families of operators. The results presented contribute to a deeper understanding of conjugation-influenced operator behavior and open avenues for further study in the context of complex symmetric operator theory.

\end{abstract} \maketitle

\section{Introduction and preliminaries}

In the present work, we consider $\mathcal{K}$ a nontrivial separable
complex Hilbert space, equipped with the norm $\left\Vert .\right\Vert $
induced by the inner product $\left\langle .,.\right\rangle $. The set $%
\mathcal{B}(\mathcal{K})$ stands for the algebra of all bounded linear
operators acting on $\mathcal{K}$, and the identity operator is denoted by $%
I $.

An operator $S\in \mathcal{B}(\mathcal{K})$ is called normal if $S^{\ast
}S=SS^{\ast }$, $2$-normal if $S^{2}S^{\ast }=S^{\ast }S^{2}$, $n$-normal if
$S^{n}S^{\ast }=S^{\ast }S^{n}$ for $n\in
\mathbb{N}
$, isometry if $S^{\ast }S=I$, unitary if $S^{\ast }S=SS^{\ast }=I$, and $S$
is selfadjoint if $S^{\ast }=S$, where $S^{\ast }$ is the adjoint of $S$.
Further details concerning these classes can be found in \cite%
{ALZ,B,C,GU2,GU,JI,W}.

Let $S\in \mathcal{B}(\mathcal{K})$, We write $\ker \left( S\right) $ for
the kernel of $S$ and $\sigma _{ap}\left( S\right) $ for its approximate
point spectrum. A conjugation on the Hilbert space $\mathcal{K}$ is defined
as an antilinear mapping $C:\mathcal{K}\longrightarrow \mathcal{K}$ that
fulfills the following conditions:

(i) $C$ is antilinear, i.e., $C(\alpha x+y)=\overline{\alpha }Cx+Cy$ for $%
x,y\in \mathcal{K}$ and $\alpha \in
\mathbb{C}
$,

(ii) $C$ is invertible with $C^{-1}=C$ $(C^{2}=I)$,

(iii) $\left\langle Cx,Cy\right\rangle =\left\langle y,x\right\rangle $, for
all $x,y\in \mathcal{K}$.

We observe that if $S\in \mathcal{B}(\mathcal{K})$, then $CSC\in \mathcal{B}(%
\mathcal{K})$ and $\left( CSC\right) ^{\ast }=CS^{\ast }C$ . An operator $%
S\in \mathcal{B}(\mathcal{K})$ is called $C$-symmetric if there exists a
conjugation $C$ on $\mathcal{K}$ such that $CSC=S^{\ast }$ (see \cite%
{GAR,GAR1}). The study of $C$-symmetric was first developed by Garcia,
Putinar, and Wogen \cite{GAR1, GAR2}. This class includes, all normal
operators, Hankel operators, all $2\times 2$ matrices, algebraic operators
of order $2$, truncated Toeplitz operators, and the Volterra integration
operator (see \cite{J,Z,Z2}).

Given a conjugation $C$ \ on $\mathcal{K}$, an operator $S\in \mathcal{B}(%
\mathcal{K})$ is said to be $C$-normal if $CS^{\ast }SC=SS^{\ast }$, or
equivalently, $C|S|C=|S^{\ast }|$, where $|S|=(S^{\ast }S)^{\frac{1}{2}}$.
This notion was first introduced by Ptak et al. in 2020 by Ptak et al. \cite%
{PT}, and it contains the class of $C$-symmetric operators. It is also
immediate that $S$ is $C$-normal if and only if its adjoint is $C$-normal.

Wang et al. \cite{WA} investigated the structural aspects of $C$-normal
operators. Later, in 2021, Ko et al. \cite{KO} analyzed several properties
of this class. In particular, they showed that $S-\lambda I$ is $C$-normal
operator for all $\lambda \in
\mathbb{C}
$ if and only if $S$ is a complex symmetric. More recently, additional
contributions concerning $C$-normal operators have appeared in \cite{L}.

Every operator $S$ in $\mathcal{B}(\mathcal{K})$ admits a unique polar
decomposition of the form $S=U|S|$, where $U$ partial isometry determined by
the conditions $\ker \left( U\right) =\ker \left( \left\vert S\right\vert
\right) $ and $\ker \left( U^{\ast }\right) =\ker \left( S^{\ast }\right) $
(see \cite{KO}). Moreover, if $C$ is a conjugation on $\mathcal{K}$ and $U$
is unitary, then for all $n\in
\mathbb{N}
$ the operator $(U^{\ast })^{n}CU^{n}$ defines again a conjugation on $%
\mathcal{K}$ (see \cite{PT}).

Recall that if $X$ is a bounded antilinear operator on $\mathcal{H}$, then
there exists a unique bounded antilinear $X^{\#}$ on $\mathcal{H}$, denoted
the antilinear adjoint of $X$, which satisfies
\begin{equation}
\left\langle Xx,y\right\rangle =\overline{\left\langle
x,X^{\#}y\right\rangle }\text{, }\forall x,y\in \mathcal{K}\text{.}
\label{X}
\end{equation}

Clearly, $C^{\#}=C$ and for $S\in \mathcal{B}(\mathcal{K}$ then $%
(CS)^{\#}=S^{\ast }C$ and $(SC)^{\#}=CS^{\ast }$.

Moreover, for antilinear operators $X$ and $Y$, we get immediately from (\ref%
{X}) that $\left( X^{\#}\right) ^{\#}=X$ and $\left( X+Y\right)
^{\#}=X^{\#}+Y^{\#}$.

An antilinear operator $S$ is said to be antilinearly normal when it
satisfies $S^{\#}S=SS^{\#}$ \cite{PT}, and it is $C$-normal if $(SC)^{\#}$
commutes with $SC$ .

\bigskip This paper aims to present the structure of a particular class of
bounded linear operators, known as $2$-$C$-normal operators, and to discuss
several of their properties.
\section{ Main results}

Here, we introduce $2$-$C$-normal operators and explore some of their
characteristic properties. Firstly, we define $2$-$C$-normal operators as
follows. In all of the following, we consider $C$ a conjugation on $\mathcal{%
K}$.

\begin{definition}
An operator $S\in \mathcal{B}(\mathcal{K})$ is called $2$-$C$-normal whenever%
\begin{equation*}
(SC)^{2}(SC)^{\#}=(SC)^{\#}(SC)^{2}\text{.}
\end{equation*}
\end{definition}

\begin{remark}
(i) It is not difficult by this definition to verify that every $C$-normal
is $2$-$C$-normal .
\end{remark}

(ii) Note that, since $(SC)^{\#}=CS^{\ast }$, the operator $S\in \mathcal{B}(%
\mathcal{K})$ satisfies the $2$-$C$-normality condition precisely when%
\begin{equation*}
(SC)^{2}\left( CS^{\ast }\right) =\left( CS^{\ast }\right) (SC)^{2}\text{.}
\end{equation*}

(iii) It is clear that if $S$ is $2$-$C$-normal, the operator $\alpha S$ \
remains $2$-$C$-normal for all scalars $\alpha \in
\mathbb{C}
$.

The next examples illustrate the existence of an operator that is $2$-$C$%
-normal but fails to be $C$-normal.

\begin{example}
Take $S=\left(
\begin{array}{cc}
3 & 5 \\
1 & 3%
\end{array}%
\right) $, with the conjugation $C:%
\mathbb{C}
^{2}\rightarrow $ $%
\mathbb{C}
^{2}$, $C(z_{1},z_{2})=(\overline{z_{2}},\overline{z_{1}})$. One can check
by direct calculation that $S$ satisfies $2$-$C$-normality but does not
satisfy $C$-normality.
\end{example}

\begin{example}
Consider the canonical conjugation $C$ on a complex Hilbert space $\mathcal{H%
}$, given by
\begin{equation*}
C\bigg(\sum_{k=1}^{\infty }\omega _{k}e_{k}\bigg)=\sum_{k=1}^{\infty }%
\overline{\omega _{k}}e_{k}\text{,}
\end{equation*}%
where $(e_{k})_{k}$ is an orthonormal basis of $\mathcal{H}$ with $%
Ce_{k}=e_{k}$. For a complex sequence $\big(\omega _{k}\big)_{k\geq 1},$ the
operator $T$ on $\mathcal{H}$, defined as the weighted shift, is given by
\begin{equation*}
Te_{k}=\omega _{k}e_{k+1}\;\;\;\hbox{for all}\;\;k\geq 1.
\end{equation*}%
Recall that $T$ is bounded precisely when the sequence of weights $\big(%
\omega _{k}\big)_{k\geq 1}$ is bounded. Consider the case when $\big(\omega
_{k}\big)_{k\geq 1}$ is bounded and satisfies the conditions $|\omega
_{k_{0}}|\not=|\omega _{k_{0}-1}|$ for some integer $k_{0}$ and $|\omega
_{k+1}|=|\omega _{k-1}|$ for $k\geq 2.$ According to \cite[Proposition 3.1]%
{LLL} establishes that $T$ fails to be $C$-normal. While a straightforward
computation indicates that
\begin{equation*}
\bigg(CT^{\ast }\big(CT\big)^{2}-\big(CT\big)^{2}CT^{\ast }\bigg)e_{k}=%
\overline{\omega _{k}}\bigg(|w_{k+1}|^{2}-|\omega _{k-1}|^{2}\bigg)e_{k+1}=0%
\text{.}
\end{equation*}

Therefore, we conclude that $T$ is $2$-$C$-normal.
\end{example}

\begin{example}
Let us consider the Hilbert space $L^{2}\big(\lbrack 0,2\pi ],\mathbb{C}\big)
$ and define the conjugation operator $C$ on $L^{2}\big(\lbrack 0,2\pi ],%
\mathbb{C}\big)$ given by $(Ch)(\theta )=\overline{h(2\pi -\theta )}$ for $%
\theta \in \lbrack 0,2\pi ].$ Assume that $\psi \in L^{\infty }\big(\lbrack
0,2\pi ],\mathbb{C}\big)$. Let $M_{\psi }\in \mathcal{B}\big(L^{2}\big(%
\lbrack 0,2\pi ],\mathbb{C}\big)\big)$ be the multiplication operator given
by $M_{\psi }h=\psi h$. Then $M_{\psi }$ is $2$-$C$-normal operator.

\vskip0.2 cm \noindent In fact, a direct calculation shows that
\begin{equation*}
CM_{\psi }^{\ast }\big(CM_{\psi }\big)^{2}h(\theta )=\psi (\theta )|\psi
(2\pi -\theta )|^{2}\overline{h(2\pi -\theta )},
\end{equation*}%
and
\begin{equation*}
\big(CM_{\psi }\big)^{2}CM_{\psi }^{\ast }h(\theta )=\psi (\theta )|\psi
(2\pi -\theta )|^{2}\overline{h(2\pi -\theta )}.
\end{equation*}%
Therefore, $M_{\psi }$ is $2$-$C$-normal. However $M_{\psi }$ fails to be
necessarily $C$-normal by \cite[Example 2]{KO}.
\end{example}

\begin{proposition}
If $S\in \mathcal{B}(\mathcal{K})$ is $2$-$C$-normal, then $\left( SC\right)
^{2}$ is normal operator.
\end{proposition}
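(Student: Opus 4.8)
The plan is to observe first that $(SC)^{2}$ is a \emph{linear} operator—being the composition of the two antilinear operators $SC$ and $SC$—so that asking for its normality refers to the ordinary (linear) adjoint $\big((SC)^{2}\big)^{\ast}$. The whole argument then reduces to two ingredients: identifying this linear adjoint in terms of the antilinear adjoint $(SC)^{\#}$, and feeding the defining $2$-$C$-normality identity into a short associativity computation.

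For the first ingredient, write $X=SC$ and recall from \eqref{X} that $X^{\#}=CS^{\ast}$. I would show directly that $\big(X^{2}\big)^{\ast}=(X^{\#})^{2}$: for all $x,y\in\mathcal{K}$, applying \eqref{X} to the outer and then to the inner factor of $X$ gives
\[
\big\langle X^{2}x,y\big\rangle=\overline{\big\langle Xx,X^{\#}y\big\rangle}=\big\langle x,(X^{\#})^{2}y\big\rangle,
\]
so that $\big((SC)^{2}\big)^{\ast}=\big((SC)^{\#}\big)^{2}=(CS^{\ast})^{2}$. The two complex conjugations introduced by \eqref{X} cancel precisely because $X^{2}$ is linear.

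With this in hand, normality of $(SC)^{2}$ amounts to $(X^{\#})^{2}X^{2}=X^{2}(X^{\#})^{2}$. Here the $2$-$C$-normality hypothesis is exactly the commutation relation $X^{2}X^{\#}=X^{\#}X^{2}$, and I would simply apply it twice inside the associative product:
\[
X^{2}(X^{\#})^{2}=\big(X^{2}X^{\#}\big)X^{\#}=\big(X^{\#}X^{2}\big)X^{\#}=X^{\#}\big(X^{2}X^{\#}\big)=X^{\#}\big(X^{\#}X^{2}\big)=(X^{\#})^{2}X^{2}.
\]
Combined with the adjoint identity from the previous step, this yields $\big((SC)^{2}\big)^{\ast}(SC)^{2}=(SC)^{2}\big((SC)^{2}\big)^{\ast}$, i.e.\ $(SC)^{2}$ is normal.

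I expect the only genuinely delicate point to be the adjoint bookkeeping in the first step—keeping straight which compositions are linear and which are antilinear, and verifying that the conjugations coming from \eqref{X} cancel so that $\big(X^{2}\big)^{\ast}$ really equals $(X^{\#})^{2}$ rather than its conjugate. Once that identification is secured, normality is a purely formal consequence of the hypothesis, needing nothing beyond associativity of composition.
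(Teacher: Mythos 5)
Your proposal is correct and takes essentially the same route as the paper: both arguments use the duality relation \eqref{X} to express the linear adjoint of $(SC)^{2}$ through the antilinear adjoint $(SC)^{\#}$ and then deduce normality from the hypothesis by pure associativity. Your explicit identity $\bigl((SC)^{2}\bigr)^{\ast}=\bigl((SC)^{\#}\bigr)^{2}$ is just a cleaner packaging of the paper's intermediate relation $\bigl((SC)^{2}\bigr)^{\ast}(SC)=(SC)\bigl((SC)^{2}\bigr)^{\ast}$, which is equivalent to it under the same inner-product bookkeeping.
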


\begin{proof}
For every $x,y\in \mathcal{K}$, we obtain
\begin{eqnarray*}
\left\langle (SC)^{\#}(SC)^{2}x,y\right\rangle &=&\overline{\left\langle
(SC)^{2}x,(SC)y\right\rangle } \\
&=&\overline{\left\langle x,((SC)^{2})^{\ast }(SC)y\right\rangle }\text{,}
\end{eqnarray*}

and $\ \ \ \ \ \ \ \ \ \ \ \ \ \ \ \ \ \ \ \ \ \ \ \ \ \ \ $%
\begin{eqnarray*}
\left\langle (SC)^{2}(SC)^{\#}x,y\right\rangle &=&\left\langle
(SC)^{\#}x,((SC)^{2})^{\ast }y\right\rangle \\
&=&\overline{\left\langle x,(SC)((SC)^{2})^{\ast }y\right\rangle }\text{.}
\end{eqnarray*}

This implies that
\begin{equation*}
((SC)^{2})^{\ast }(SC)=(SC)((SC)^{2})^{\ast }\text{.}
\end{equation*}

Therefore, we infer that%
\begin{equation*}
((SC)^{2})^{\ast }(SC)(SC)=(SC)((SC)^{2})^{\ast }(SC)\text{.}
\end{equation*}

Hence
\begin{eqnarray*}
((SC)^{2})^{\ast }(SC)^{2} &=&(SC)((SC)^{2})^{\ast }(SC) \\
&=&(SC)^{2}((SC)^{2})^{\ast }\text{.}
\end{eqnarray*}

This means that $(SC)^{2}$ is a normal.
\end{proof}

\begin{definition}
Let $X$ be an antilinearly operator. We said that $X$ is antilinearly $2$%
-normal if
\begin{equation*}
X^{2}X^{\#}=X^{\#}X^{2}\text{.}
\end{equation*}
\end{definition}

\begin{proposition}
Every antilinear normal operator $X$ is necessarily antilinear $2$-normal.
\end{proposition}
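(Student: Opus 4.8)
The plan is to proceed by a purely algebraic manipulation, mirroring the classical fact that a normal linear operator is automatically $2$-normal. Since $X$ is antilinear, its square $X^{2}$ is linear while $X^{\#}$ is again antilinear, so both $X^{2}X^{\#}$ and $X^{\#}X^{2}$ are antilinear operators on $\mathcal{K}$; I would simply verify that the normality hypothesis forces them to coincide.

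First I would record the normality assumption in the form $XX^{\#}=X^{\#}X$, which is exactly the definition of $X$ being antilinearly normal. The only ingredients I need beyond this are the associativity of operator composition and the observation that the relevant products are well defined (which they are, by the parity count above, since a linear operator composed with an antilinear one is antilinear). Next I would carry out the telescoping computation: starting from $X^{2}X^{\#}=X(XX^{\#})$, substitute the commutation relation to get $X(X^{\#}X)$, reassociate as $(XX^{\#})X$, apply the commutation relation a second time to obtain $(X^{\#}X)X$, and finally reassociate to reach $X^{\#}X^{2}$. Concretely,
\begin{equation*}
X^{2}X^{\#}=X\,(XX^{\#})=X\,(X^{\#}X)=(XX^{\#})\,X=(X^{\#}X)\,X=X^{\#}X^{2},
\end{equation*}
which is precisely the antilinear $2$-normality identity.

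There is essentially no obstacle here: the result follows at once from applying the single commutation relation $XX^{\#}=X^{\#}X$ twice across associativity, and the antilinearity of $X$ plays no active role in the argument beyond guaranteeing that $X^{2}X^{\#}$ and $X^{\#}X^{2}$ are well-defined antilinear operators. The only point worth stating explicitly is that the antilinear adjoint $\#$ is never applied to a product in the computation, so no composition rule for $\#$ (such as a formula for $(XY)^{\#}$) is required.
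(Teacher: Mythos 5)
Your proof is correct and matches the paper's argument essentially verbatim: both apply the single relation $XX^{\#}=X^{\#}X$ twice across associativity to telescope $X^{2}X^{\#}$ into $X^{\#}X^{2}$. Your added remarks on parity (that $X^{2}X^{\#}$ and $X^{\#}X^{2}$ are well-defined antilinear operators) and on never needing a product rule for $\#$ are accurate but not load-bearing.
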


\begin{proof}
Since $X$ is antilinearly normal, then
\begin{equation*}
XX^{\#}=X^{\#}X\text{,}
\end{equation*}%
it follows that%
\begin{equation*}
X^{2}X^{\#}=XXX^{\#}=XX^{\#}X=X^{\#}X^{2}\text{.}
\end{equation*}

Then, $X$ is antilinearly $2$-normal.
\end{proof}

\begin{theorem}
\label{TH1}For $S\in \mathcal{B}(\mathcal{K})$, the operator $S$ is $2$-$C$%
-normal operator if and only if the relation
\begin{equation*}
CSS^{\ast }CS^{\ast }=S^{\ast }CS^{\ast }SC
\end{equation*}
\end{theorem}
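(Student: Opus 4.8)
The plan is to read the defining identity $(SC)^{2}(SC)^{\#}=(SC)^{\#}(SC)^{2}$ as the assertion that the \emph{linear} operator $(SC)^{2}$ commutes with the \emph{antilinear} operator $(SC)^{\#}$, and then to reach the displayed relation by applying the antilinear adjoint $\#$ to the whole equation, exploiting that $\#$ is an involution. Throughout I would abbreviate $A=SC$, so that $A$ is antilinear, $A^{\#}=(SC)^{\#}=CS^{\ast}$ by the formula recorded in the introduction, and $A^{2}=(SC)^{2}$ is a bounded linear operator; the defining condition is then simply $A^{2}A^{\#}=A^{\#}A^{2}$.

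First I would isolate the two algebraic facts that drive the argument: that $\#$ is an involution satisfying $(X+Y)^{\#}=X^{\#}+Y^{\#}$, and that the ordinary adjoint of the linear operator $A^{2}$ obeys $(A^{2})^{\ast}=(A^{\#})^{2}$. The latter comes straight from the defining relation \eqref{X}, since for all $x,y$ one has $\langle A^{2}x,y\rangle=\overline{\langle Ax,A^{\#}y\rangle}=\langle x,A^{\#}A^{\#}y\rangle$. Granting these, I take the $\#$-adjoint of $A^{2}A^{\#}=A^{\#}A^{2}$: the adjoint reverses each product and replaces the linear block $A^{2}$ by $(A^{2})^{\ast}=(A^{\#})^{2}$, so that $(A^{2}A^{\#})^{\#}=(A^{\#})^{\#}(A^{2})^{\ast}=A(A^{\#})^{2}$ and $(A^{\#}A^{2})^{\#}=(A^{2})^{\ast}(A^{\#})^{\#}=(A^{\#})^{2}A$. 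This yields the equivalent identity $A(A^{\#})^{2}=(A^{\#})^{2}A$, and equivalence in both directions is automatic because $\#$ is an involution.

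It then remains to substitute $A=SC$, $A^{\#}=CS^{\ast}$ and simplify with $C^{2}=I$. Expanding $A(A^{\#})^{2}=SC\,(CS^{\ast})^{2}=SS^{\ast}CS^{\ast}$ and $(A^{\#})^{2}A=(CS^{\ast})^{2}SC=CS^{\ast}CS^{\ast}SC$, the identity reads $SS^{\ast}CS^{\ast}=CS^{\ast}CS^{\ast}SC$. Multiplying on the left by $C$ and collapsing $C^{2}=I$ on the right-hand side turns this into $CSS^{\ast}CS^{\ast}=S^{\ast}CS^{\ast}SC$, which is exactly the asserted relation; because left multiplication by $C$ is invertible, neither implication is lost, so the equivalence is complete.

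The only genuine subtlety — and the step I would verify most carefully — is the bookkeeping for the antilinear adjoint of the mixed products: one must remember that $\#$ reverses the order of the factors, sends each linear factor to its ordinary adjoint and each copy of $C$ to itself, and, crucially, that the linear block $(SC)^{2}$ contributes $((SC)^{\#})^{2}=(CS^{\ast})^{2}$ rather than $(SC)^{2}$ itself. Once this rule is applied consistently, everything that remains is routine cancellation of adjacent factors $C$ via $C^{2}=I$.
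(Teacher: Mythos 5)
Your proposal is correct and takes essentially the same route as the paper: both pass from the defining identity $(SC)^{2}(SC)^{\#}=(SC)^{\#}(SC)^{2}$ to the adjoint identity $(SC)(CS^{\ast})^{2}=(CS^{\ast})^{2}(SC)$ and then cancel via $C^{2}=I$ after left-multiplying by $C$. The only difference is presentational — the paper carries out the $\#$-adjoint computation pointwise through inner products $\langle\cdot,\cdot\rangle$, whereas you package it as the product-reversal rules for $\#$ together with the identity $\left((SC)^{2}\right)^{\ast}=\left((SC)^{\#}\right)^{2}$ — so there is no gap.
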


is satisfied.

\begin{proof}
Because $S$ belongs to the class of $C$-normal operators, then for any $%
x,y\in \mathcal{K}$, we can observe that%
\begin{eqnarray*}
\left\langle (SC)^{2}(SC)^{\#}x,y\right\rangle &=&\left\langle
SCSC(SC)^{\#}x,y\right\rangle \\
&=&\left\langle CSC(SC)^{\#}x,S^{\ast }y\right\rangle \\
\ &=&\left\langle CS^{\ast }y,SC(SC)^{\#}x\right\rangle \\
&=&\left\langle S^{\ast }CS^{\ast }y,C(SC)^{\#}x\right\rangle \\
&=&\left\langle (SC)^{\#}x,CS^{\ast }CS^{\ast }y\right\rangle \\
&=&\overline{\left\langle x,(SC)CS^{\ast }CS^{\ast }y\right\rangle } \\
\ &=&\overline{\left\langle x,(SC)(CS^{\ast })^{2}y\right\rangle }\text{.}
\end{eqnarray*}%
On the other hand, it holds that
\begin{eqnarray*}
&&\left\langle (SC)^{\#}(SC)^{2}x,y\right\rangle \\
&=&\overline{\left\langle SCSCx,(SC)y\right\rangle } \\
&=&\overline{\left\langle CSCx,S^{\ast }(SC)y\right\rangle } \\
\ &=&\overline{\left\langle CS^{\ast }(SC)y,SCx\right\rangle } \\
&=&\overline{\left\langle S^{\ast }CS^{\ast }(SC)y,Cx\right\rangle } \\
\ &=&\overline{\left\langle x,CS^{\ast }CS^{\ast }(SC)y\right\rangle }\  \\
&=&\overline{\left\langle x,(CS^{\ast })^{2}(SC)y\right\rangle }\text{.}
\end{eqnarray*}%
Therefore, we get%
\begin{equation*}
(SC)(CS^{\ast })^{2}=(CS^{\ast })^{2}(SC)\text{.}
\end{equation*}

Thus,
\begin{equation*}
SCCS^{\ast }CS^{\ast }=CS^{\ast }CS^{\ast }SC\text{.}
\end{equation*}%
Hence,
\begin{equation*}
CSS^{\ast }CS^{\ast }=S^{\ast }CS^{\ast }SC\text{.}
\end{equation*}

This completes the proof.
\end{proof}

\begin{proposition}
Let $\{e_{i}\}_{i=1}^{2n+1}$ be an orthonormal basis of $\
\mathbb{C}
^{2n+1}$ and define the operator $S$ as follows
\begin{equation*}
Se_{i}=\alpha _{i}e_{i+1}\text{ and }Se_{2n+1}=0\text{,}
\end{equation*}
\end{proposition}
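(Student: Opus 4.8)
The plan is to turn the antilinear operator identity of the definition into a finite system of scalar equations by evaluating everything on the orthonormal basis $\{e_{i}\}_{i=1}^{2n+1}$. Since $(SC)^{\#}=CS^{\ast}$, the operator $S$ is $2$-$C$-normal precisely when $(SC)^{2}(CS^{\ast})=(CS^{\ast})(SC)^{2}$; one could equally invoke the characterization of Theorem \ref{TH1}. Either way, the first task is to record, in closed form, the action on each $e_{i}$ of the three building blocks $SC$, $CS^{\ast}=(SC)^{\#}$, and $(SC)^{2}$.

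First I would write down the elementary actions. From $Se_{i}=\alpha_{i}e_{i+1}$, with the convention $\alpha_{2n+1}=0$, one reads off $S^{\ast}e_{i}=\overline{\alpha_{i-1}}e_{i-1}$ (and $\alpha_{0}=0$, so $S^{\ast}e_{1}=0$). Applying the given conjugation $C$ to the basis vectors and composing, both $SC$ and $(SC)^{\#}=CS^{\ast}$ send each $e_{i}$ to a scalar multiple of a single basis vector, so each is a one-step antilinear weighted shift. Composing two antilinear steps, $(SC)^{2}$ is a genuine linear operator carrying $e_{i}$ to a scalar multiple of a basis vector two indices away; here the intermediate antilinear step conjugates the weight, which is the first point at which the bookkeeping must be done carefully.

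Next I would evaluate $(SC)^{2}(SC)^{\#}e_{i}$ and $(SC)^{\#}(SC)^{2}e_{i}$ for each $i$. By construction both land on the same basis vector, so the whole identity collapses to equality of two scalar coefficients for every $i$. Keeping track of the conjugations---the linear map $(SC)^{2}$ leaves a coefficient untouched while the antilinear map $(SC)^{\#}$ conjugates it---these coefficients reduce to expressions of the shape $|\alpha_{i-1}|^{2}\alpha_{i}$ and $\alpha_{i}|\alpha_{i+1}|^{2}$. Hence $2$-$C$-normality is equivalent to $\alpha_{i}\big(|\alpha_{i-1}|^{2}-|\alpha_{i+1}|^{2}\big)=0$ for every $i$, i.e. to the modulus symmetry $|\alpha_{i-1}|=|\alpha_{i+1}|$ on those indices where $\alpha_{i}\neq 0$---exactly the condition governing the weighted-shift example above. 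The stated conclusion then follows by reading off this condition, or by direct substitution when the weights satisfy it by hypothesis.

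The main obstacle is not a hard estimate but the disciplined handling of boundary indices together with antilinearity. Because $Se_{2n+1}=0$ and $C$ typically moves $e_{i}$ toward the opposite end of the basis, several of the coefficients above formally involve out-of-range data ($\alpha_{0}$, $\alpha_{2n+1}$, or a nonexistent $e_{2n+2}$), and these edge cases must be verified separately to confirm that the two sides still agree there. Just as importantly, every passage through $SC$ or $CS^{\ast}$ conjugates the scalar weight, and a single dropped conjugation would replace $|\alpha_{i\pm1}|^{2}$ by $\alpha_{i\pm1}^{2}$ and destroy the reduction; once the basis actions are pinned down correctly, the remaining comparison is routine.
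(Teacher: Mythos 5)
Your overall plan (evaluate the $2$-$C$-normality identity on the basis vectors and compare scalar coefficients, via $(SC)^{\#}=CS^{\ast}$ or Theorem \ref{TH1}) is the same as the paper's direct computation, but your execution uses the wrong conjugation, and this breaks the argument. The conjugation in this proposition is the flip $C(z_{1},\dots ,z_{2n+1})=(\overline{z_{2n+1}},\dots ,\overline{z_{1}})$, i.e.\ $Ce_{i}=e_{2n+2-i}$, not the canonical conjugation $Ce_{k}=e_{k}$ from the weighted-shift example you invoke. Consequently $SC$ is \emph{not} a one-step antilinear shift: $SCe_{i}=\alpha_{2n+2-i}e_{2n+3-i}$ is a reflection about the midpoint of the basis, and $(SC)^{2}$ is \emph{diagonal}, $(SC)^{2}e_{i}=\alpha_{i-1}\overline{\alpha_{2n+2-i}}\,e_{i}$, not a two-step shift as you assert. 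Your resulting scalar criterion $\alpha_{i}\bigl(|\alpha_{i-1}|^{2}-|\alpha_{i+1}|^{2}\bigr)=0$ is the condition for the canonical conjugation, not for this $C$; and it cannot be the right reduction here, because the proposition asserts $2$-$C$-normality for \emph{all} weights $(\alpha_{i})_{i=1}^{2n}$, with no hypothesis on them --- there is no ``by hypothesis'' clause available, so your conditional conclusion would actually contradict the statement being proved.

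The correct bookkeeping with the flip conjugation (setting $\alpha_{0}=\alpha_{2n+1}=0$) gives $S^{\ast}CS^{\ast}SC\,e_{i}=\overline{\alpha_{i-1}}\,|\alpha_{2n+2-i}|^{2}\,e_{i-1}$ and likewise $CSS^{\ast}CS^{\ast}e_{i}=\overline{\alpha_{i-1}}\,|\alpha_{2n+2-i}|^{2}\,e_{i-1}$, so the two sides agree identically: the index reversal effected by $C$ pairs each weight $\alpha_{i-1}$ with the mirror-image modulus $|\alpha_{2n+2-i}|^{2}$ in the same way on both sides, which is precisely why no condition on the weights is needed. This is the content of the paper's matrix computation, where both products reduce to the vector with entries $\overline{\alpha_{i}}\,|\alpha_{2n+1-i}|^{2}z_{i+1}$. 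So the gap is concrete: you computed with $Ce_{k}=e_{k}$ instead of $Ce_{i}=e_{2n+2-i}$, which falsifies your structural claims about $SC$ and $(SC)^{2}$ and turns an unconditional statement into a conditional one.
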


where $(\alpha _{i})_{i=1}^{2n}\in
\mathbb{C}
$. Let $C$ be a conjugation on $%
\mathbb{C}
^{2n+1}$ defined by
\begin{equation*}
C(z_{1},z_{2},....,z_{2n},z_{2n+1})=(\overline{z_{2n+1}},\overline{z_{2n}}%
,....,\overline{z_{2}},\overline{z_{1}})\text{.}
\end{equation*}%
Then, $S$ is $2$-$C$-normal operator for all $(\alpha _{i})_{i=1}^{2n}\in
\mathbb{C}
$.

\begin{proof}
For every $z\in
\mathbb{C}
^{2n+1}$, we have
\begin{eqnarray*}
S^{\ast }CS^{\ast }SC(z) &=&S^{\ast }C\left(
\begin{array}{ccccccc}
\left\vert \alpha _{1}\right\vert ^{2} & 0 & 0 & .. & .. & .. & 0 \\
0 & \left\vert \alpha _{2}\right\vert ^{2} & 0 & .. & .. & .. & .. \\
0 & 0 & \left\vert \alpha _{3}\right\vert ^{2} & .. & .. & .. & .. \\
.. & .. & .. & .. & .. & .. & .. \\
.. & .. & .. & .. & .. & .. & .. \\
.. & .. & .. & .. & .. & \left\vert \alpha _{2n}\right\vert ^{2} & 0 \\
0 & .. & .. & .. & .. & 0 & 0%
\end{array}%
\right) \left(
\begin{array}{c}
\overline{z_{2n+1}} \\
\overline{z_{2n}} \\
\overline{z_{2n-1}} \\
.. \\
.. \\
\overline{z_{2}} \\
\overline{z_{1}}%
\end{array}%
\right) \\
&=&\left(
\begin{array}{ccccccc}
0 & \overline{\alpha _{1}} & 0 & 0 & .. & .. & 0 \\
0 & 0 & \overline{\alpha _{2}} & 0 & .. & .. & .. \\
0 & 0 & 0 & \overline{\alpha _{3}} & .. & .. & .. \\
.. & .. & .. & .. & .. & .. & .. \\
.. & .. & .. & .. & .. & \overline{\alpha _{2n-1}} & 0 \\
.. & .. & .. & .. & .. & 0 & \overline{\alpha _{2n}} \\
0 & .. & .. & .. & .. & 0 & 0%
\end{array}%
\right) \left(
\begin{array}{c}
0 \\
\left\vert \alpha _{2n}\right\vert ^{2}z_{2} \\
\left\vert \alpha _{2n-1}\right\vert ^{2}z_{3} \\
.. \\
\left\vert \alpha _{3}\right\vert ^{2}z_{2n-1} \\
\left\vert \alpha _{2}\right\vert ^{2}z_{2n} \\
\left\vert \alpha _{1}\right\vert ^{2}z_{2n+1}%
\end{array}%
\right) \\
&=&\left(
\begin{array}{c}
\overline{\alpha _{1}}\left\vert \alpha _{2n}\right\vert ^{2}z_{2} \\
\overline{\alpha _{2}}\left\vert \alpha _{2n-1}\right\vert ^{2}z_{3} \\
\overline{\alpha _{3}}\left\vert \alpha _{2n-2}\right\vert ^{2}z_{4} \\
.. \\
\overline{\alpha _{2n-1}}\left\vert \alpha _{2}\right\vert ^{2}z_{2n} \\
\overline{\alpha _{2n}}\left\vert \alpha _{1}\right\vert ^{2}z_{2n+1} \\
0%
\end{array}%
\right) \text{.}
\end{eqnarray*}%
In a similar way, we can see that%
\begin{eqnarray*}
CSS^{\ast }CS^{\ast }(z) &=&CSS^{\ast }C\left(
\begin{array}{ccccccc}
0 & \overline{\alpha _{1}} & 0 & 0 & .. & .. & 0 \\
0 & 0 & \overline{\alpha _{2}} & 0 & .. & .. & .. \\
0 & 0 & 0 & \overline{\alpha _{3}} & .. & .. & .. \\
.. & .. & .. & .. & .. & .. & .. \\
.. & .. & .. & .. & .. & \overline{\alpha _{2n-1}} & 0 \\
.. & .. & .. & .. & .. & 0 & \overline{\alpha _{2n}} \\
0 & .. & .. & .. & .. & 0 & 0%
\end{array}%
\right) \left(
\begin{array}{c}
z_{1} \\
z_{2} \\
z_{3} \\
.. \\
.. \\
z_{2n} \\
z_{2n+1}%
\end{array}%
\right) \\
&=&C\left(
\begin{array}{ccccccc}
0 & 0 & 0 & .. & .. & .. & 0 \\
0 & \left\vert \alpha _{1}\right\vert ^{2} & 0 & .. & .. & .. & .. \\
0 & 0 & \left\vert \alpha _{2}\right\vert ^{2} & .. & .. & .. & .. \\
.. & .. & .. & .. & .. & .. & .. \\
.. & .. & .. & .. & .. & .. & .. \\
.. & .. & .. & .. & .. & \left\vert \alpha _{2n-1}\right\vert ^{2} & 0 \\
0 & .. & .. & .. & .. & 0 & \left\vert \alpha _{2n}\right\vert ^{2}%
\end{array}%
\right) \left(
\begin{array}{c}
0 \\
\alpha _{2n}\overline{z_{2n+1}} \\
\alpha _{2n-1}\overline{z_{2n}} \\
.. \\
\alpha _{3}\overline{z_{4}} \\
\alpha _{2}\overline{z_{3}} \\
\alpha _{1}\overline{z_{2}}%
\end{array}%
\right)
\end{eqnarray*}%
\begin{equation*}
=C\left(
\begin{array}{c}
0 \\
\left\vert \alpha _{1}\right\vert ^{2}\alpha _{2n}\overline{z_{2n+1}} \\
\left\vert \alpha _{2}\right\vert ^{2}\alpha _{2n-1}\overline{z_{2n}} \\
.. \\
\left\vert \alpha _{2n-1}\right\vert ^{2}\alpha _{3}\overline{z_{4}} \\
\left\vert \alpha _{2n-1}\right\vert ^{2}\alpha _{2}\overline{z_{3}} \\
\left\vert \alpha _{2n}\right\vert ^{2}\alpha _{1}\overline{z_{2}}%
\end{array}%
\right) =\left(
\begin{array}{c}
\overline{\alpha _{1}}\left\vert \alpha _{2n}\right\vert ^{2}z_{2} \\
\overline{\alpha _{2}}\left\vert \alpha _{2n-1}\right\vert ^{2}z_{3} \\
\overline{\alpha _{3}}\left\vert \alpha _{2n-1}\right\vert ^{2}z_{4} \\
.. \\
\overline{\alpha _{2n-1}}\left\vert \alpha _{2}\right\vert ^{2}z_{2n} \\
\overline{\alpha _{2n}}\left\vert \alpha _{1}\right\vert ^{2}z_{2n+1} \\
0%
\end{array}%
\right) \text{.}
\end{equation*}

This shows that%
\begin{equation*}
S^{\ast }CS^{\ast }SC=CSS^{\ast }CS^{\ast }\text{ for all }(\alpha
_{i})_{i=1}^{2n}\in
\mathbb{C}
\text{.}
\end{equation*}%
Hence, it can be deduced that $S$ is $2$-$C$-normal.
\end{proof}

\begin{proposition}
If $S\in \mathcal{B}(\mathcal{K})$ is both self-adjoint and $2$-$C$-normal,
then $S$ is commutative with $CS^{2}C\ $.$\ \ \ \ \ \ \ \ \ \ $
\end{proposition}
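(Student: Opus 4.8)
The plan is to reduce the claim to the characterization of $2$-$C$-normality already obtained in Theorem \ref{TH1}, rather than working directly from the definition. Recall that Theorem \ref{TH1} asserts that $S$ is $2$-$C$-normal precisely when
\[
CSS^{\ast }CS^{\ast }=S^{\ast }CS^{\ast }SC.
\]
Since $S$ is assumed self-adjoint, I would substitute $S^{\ast }=S$ into this identity. The left-hand side collapses to $CS^{2}CS$ and the right-hand side to $SCS^{2}C$, so the $2$-$C$-normality relation becomes
\[
CS^{2}CS=SCS^{2}C.
\]

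The second step is simply to read this equation as a commutation statement. Grouping the factors as $(CS^{2}C)S$ on the left and $S(CS^{2}C)$ on the right, one recognizes that the operator $CS^{2}C$ (the composite of two conjugations and a bounded linear operator, hence itself bounded and linear) commutes with $S$. This is exactly the assertion $S(CS^{2}C)=(CS^{2}C)S$, completing the argument.

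The only point requiring care is bookkeeping with the conjugation $C$: since $C$ is antilinear with $C^{2}=I$, one must track the parity of the number of occurrences of $C$ and the placement of adjacent cancellations $CC=I$. In particular, if one preferred to argue straight from the definition $(SC)^{2}(SC)^{\#}=(SC)^{\#}(SC)^{2}$ with $(SC)^{\#}=CS^{\ast }$, then after setting $S^{\ast }=S$ one obtains the antilinear identity $SCS^{2}=CS^{2}CSC$, and multiplying on the right by $C$ recovers the same linear relation $SCS^{2}C=CS^{2}CS$. I expect this associativity and cancellation bookkeeping to be the main (and essentially only) obstacle; once Theorem \ref{TH1} is invoked and self-adjointness is used, no further estimates or structural results are needed.
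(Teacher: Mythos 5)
Your proposal is correct and follows exactly the paper's own route: invoke Theorem \ref{TH1}, substitute $S^{\ast}=S$ to obtain $CS^{2}CS=SCS^{2}C$, and read this as the commutation $S(CS^{2}C)=(CS^{2}C)S$. Your supplementary derivation from the definition (arriving at $SCS^{2}=CS^{2}CSC$ and right-multiplying by $C$) is also valid, but the core argument is the same as in the paper.
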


\begin{proof}
Assuming $S$ is $2$-$C$-normal, then applying Theorem \ref{TH1}, we obtain
\begin{equation*}
CSS^{\ast }CS^{\ast }\ =S^{\ast }CS^{\ast }SC\text{.}\ \ \ \ \
\end{equation*}

This implies that
\begin{equation*}
CS^{2}CS\ =SCS^{2}C.
\end{equation*}

Thus, $S$ is commutative with $CS^{2}C$ .
\end{proof}

\begin{example}
Let $S=\left(
\begin{array}{cc}
2 & 1 \\
1 & 3%
\end{array}%
\right) $, and let $C$ be a conjugation given by\ $C(z_{1},z_{2})=(\overline{%
z_{1}},\overline{z_{2}})$. We observe that $S$ is $2$-$C$-normal and $%
S^{\ast }=S$, then
\begin{equation*}
SCS^{2}C=CS^{2}CS\text{.}
\end{equation*}%
$\ $
\end{example}

\begin{theorem}
\label{TH2} Assuming $S$ is $2$-$C$-normal, the statement below is valid.
\end{theorem}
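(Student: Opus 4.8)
The plan is to reduce the claim to the fact, already established in the Proposition above, that $(SC)^{2}$ is a normal linear operator; once this is in hand, the asserted property should be a standard consequence of normality, transported back to $S$, $S^{\ast}$ and $C$. First I would record the defining identity $((SC)^{2})^{\ast}(SC)^{2}=(SC)^{2}((SC)^{2})^{\ast}$ and, by expanding the relevant inner products, extract the norm and kernel relations that every normal operator satisfies, namely $\|(SC)^{2}x\|=\|((SC)^{2})^{\ast}x\|$ for all $x\in\mathcal{K}$ and $\ker((SC)^{2}-\mu I)=\ker(((SC)^{2})^{\ast}-\overline{\mu}I)$ for each $\mu\in\mathbb{C}$.

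Next I would translate these relations into expressions in $S$, $S^{\ast}$ and $C$ by means of the dictionary $(SC)^{\#}=CS^{\ast}$ and $((SC)^{2})^{\ast}=(CS^{\ast})^{2}=CS^{\ast}CS^{\ast}$, valid because the composition of two conjugate-linear maps is linear and $C^{2}=I$. Replacing $(SC)^{2}$ by $SCSC$ and $((SC)^{2})^{\ast}$ by $CS^{\ast}CS^{\ast}$ wherever they occur converts the abstract normality statement into the explicit equation among the bounded linear operators $S$, $S^{\ast}$ and the conjugation $C$ that the theorem requires. If the approximate point spectrum $\sigma_{ap}$ enters, I would couple the norm identity with the approximate-eigenvector characterisation of $\sigma_{ap}$ to conclude that $(SC)^{2}$ and $((SC)^{2})^{\ast}$ share the same approximate point spectrum up to conjugation, and then push this through the same dictionary.

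The step I expect to be the main obstacle is the bookkeeping forced by antilinearity: because $SC$ is conjugate-linear, inner products acquire complex conjugates (for instance $\langle SCx,y\rangle=\overline{\langle x,(SC)^{\#}y\rangle}$), and one must be careful never to confuse the antilinear adjoint $(SC)^{\#}$ with the ordinary Hilbert-space adjoint $((SC)^{2})^{\ast}$ of the composed linear operator. I would therefore perform every adjoint manipulation at the level of the \emph{linear} operator $(SC)^{2}$, where the usual adjoint rules apply without conjugation hazards, and only convert to the $S,S^{\ast},C$ form at the very end, so that no conjugate-linear adjoint ever appears inside a chain of equalities.
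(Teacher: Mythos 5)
The theorem you were asked to prove is the sentence immediately following the environment: if $S$ is $2$-$C$-normal and invertible, then $S^{-1}$ is $2$-$C$-normal. Your proposal never engages with this claim: $S^{-1}$ appears nowhere in it, and you explicitly leave the target open (``the asserted property should be a standard consequence of normality'', ``if the approximate point spectrum enters''). So there is no proof here to verify, only a strategy sketched for an unidentified statement. Worse, the strategy itself cannot reach the actual conclusion. The Proposition you lean on proves a one-way implication: $2$-$C$-normality of $S$ implies normality of $(SC)^2$, with no converse. Even if you verify that $(S^{-1}C)^2$ is normal --- which is true, since $((SC)^2)^{-1}=(CS^{-1})^2$ is normal and $S^{-1}CS^{-1}C=C\left(CS^{-1}CS^{-1}\right)C$, and conjugating a normal operator by $C$ preserves normality --- this does not yield the stronger commutation $(S^{-1}C)^2(S^{-1}C)^{\#}=(S^{-1}C)^{\#}(S^{-1}C)^2$ that defines $2$-$C$-normality of $S^{-1}$. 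The norm identity $\Vert Nx\Vert=\Vert N^{\ast}x\Vert$ and the kernel relations for normal $N$ are likewise consequences, not characterizations, so ``pushing them through the dictionary'' recovers only the weakened information.

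The paper's proof is a short direct computation from the identity of Theorem \ref{TH1}: since $C^{-1}=C$ and $S$ is invertible,
\begin{equation*}
CS^{-1}(S^{-1})^{\ast}C(S^{-1})^{\ast}=C(S^{\ast}S)^{-1}C(S^{\ast})^{-1}=(S^{\ast}CS^{\ast}SC)^{-1}=(CSS^{\ast}CS^{\ast})^{-1}=(S^{-1})^{\ast}C(S^{-1})^{\ast}S^{-1}C\text{,}
\end{equation*}
where the middle equality is exactly the $2$-$C$-normality of $S$; the first and last members form the Theorem \ref{TH1} identity for $S^{-1}$. If you want to salvage your antilinear viewpoint, the fix is to invert the \emph{defining} identity rather than pass to normality: with $X=SC$ invertible, $X^2X^{\#}=X^{\#}X^2$ gives $(X^{-1})^{\#}(X^{-1})^2=(X^{-1})^2(X^{-1})^{\#}$ because $(X^{-1})^{\#}=(X^{\#})^{-1}$; then $X^{-1}=CS^{-1}$, $S^{-1}C=CX^{-1}C$, and the rule $(CZC)^{\#}=CZ^{\#}C$ transfer antilinear $2$-normality to $S^{-1}C$, which is the desired conclusion. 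Either way, the essential step your proposal is missing is taking inverses across the $2$-$C$-normality identity itself.
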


If $S$ is an invertible, then $S^{-1}$ is a $2$-$C$-normal operator.

\begin{proof}
Suppose that $S$ is an invertible, then%
\begin{eqnarray*}
&&CS^{-1}(S^{-1})^{\ast }C(S^{-1})^{\ast } \\
&=&CS^{-1}(S^{\ast })^{-1}C(S^{\ast })^{-1} \\
&=&C(S^{\ast }S)^{-1}C(S^{\ast })^{-1} \\
\ &=&(CS^{\ast }SC)^{-1}(S^{\ast })^{-1} \\
&=&(S^{\ast }CS^{\ast }SC)^{-1} \\
&=&(CS^{\ast }SCS^{\ast })^{-1} \\
&=&(S^{-1})^{\ast }C(S^{-1})^{\ast }S^{-1}C\text{.}
\end{eqnarray*}%
\ \ \ \ \ \ \ \ \ \ \ \ \ \ \ \ \ \ \ \ \ \ \ \

Thus, $S^{-1}$ turns out to be $2$-$C$-normal.
\end{proof}

\begin{proposition}
Let $U\in \mathcal{B}(\mathcal{K})$ be unitary operator. Thus, the next
statements are valid

$(i)$ $U$ is a $2$-$C$-normal operator.

$(ii)$ $U^{n}$ is $2$-$C$-normal for all $n\in
\mathbb{N}
$.
\end{proposition}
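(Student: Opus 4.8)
The plan is to establish $(i)$ by a direct application of Theorem~\ref{TH1} and then to deduce $(ii)$ as an immediate consequence. Since $U$ is unitary we have $UU^{\ast}=U^{\ast}U=I$, and by the defining properties of a conjugation $C^{2}=I$. By Theorem~\ref{TH1}, the operator $U$ is $2$-$C$-normal precisely when
\begin{equation*}
CUU^{\ast}CU^{\ast}=U^{\ast}CU^{\ast}UC\text{.}
\end{equation*}
I would evaluate each side separately. On the left, replacing the adjacent factor $UU^{\ast}$ by $I$ and then using $C^{2}=I$ collapses the expression to $CUU^{\ast}CU^{\ast}=CICU^{\ast}=C^{2}U^{\ast}=U^{\ast}$. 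On the right, replacing the adjacent factor $U^{\ast}U$ by $I$ gives $U^{\ast}CU^{\ast}UC=U^{\ast}CIC=U^{\ast}C^{2}=U^{\ast}$. Since both sides reduce to $U^{\ast}$, the characterizing identity of Theorem~\ref{TH1} holds and $U$ is $2$-$C$-normal.

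Equivalently, one can verify the claim straight from the definition: using $(UC)^{\#}=CU^{\ast}$, the two middle copies of $C$ in $(UC)^{2}(UC)^{\#}=UCUCCU^{\ast}$ cancel via $C^{2}=I$ and a trailing $UU^{\ast}=I$ disappears, leaving $UC$; likewise $(UC)^{\#}(UC)^{2}=CU^{\ast}UCUC$ simplifies, after $U^{\ast}U=I$ and $C^{2}=I$, to $UC$ as well. A point worth flagging is that the argument uses no special compatibility between $U$ and $C$ beyond $UU^{\ast}=U^{\ast}U=I$ and $C^{2}=I$, so the conclusion in fact holds for \emph{every} conjugation $C$ on $\mathcal{K}$.

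For $(ii)$, I would first record that every power of a unitary operator is again unitary: since $(U^{n})^{\ast}=(U^{\ast})^{n}$, the relations $U^{\ast}U=UU^{\ast}=I$ yield $(U^{n})^{\ast}U^{n}=(U^{\ast})^{n}U^{n}=I$ and $U^{n}(U^{n})^{\ast}=U^{n}(U^{\ast})^{n}=I$ by a one-line induction that peels off the innermost $U^{\ast}U$ (respectively $UU^{\ast}$) factor. Hence $U^{n}$ is unitary, and applying part $(i)$ with $U^{n}$ in place of $U$ shows that $U^{n}$ is $2$-$C$-normal for all $n\in\mathbb{N}$.

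I do not expect any genuine obstacle here: the entire statement is essentially a one-line verification once Theorem~\ref{TH1} is available. The only points demanding care are the antilinearity of $C$ and the order of composition, so that the cancellations $C^{2}=I$ and $UU^{\ast}=U^{\ast}U=I$ are applied to genuinely adjacent factors rather than across an intervening operator.
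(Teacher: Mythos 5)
Your proposal is correct and takes essentially the same route as the paper: part $(i)$ verifies the identity $CUU^{\ast}CU^{\ast}=U^{\ast}CU^{\ast}UC$ of Theorem~\ref{TH1} by collapsing the adjacent factors $UU^{\ast}=U^{\ast}U=I$ and $C^{2}=I$ so that both sides reduce to $U^{\ast}$, exactly as in the paper's computation, and part $(ii)$ reduces to $(i)$ by noting $U^{n}$ is unitary, which is the paper's argument as well. Your additional definitional check via $(UC)^{\#}=CU^{\ast}$ and the remark that no compatibility between $U$ and $C$ is needed are correct but do not alter the approach.
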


\begin{proof}
$(i)$ As$\ U$ is unitary, it satisfies $U^{\ast }U=UU^{\ast }=I$.\ Hence%
\begin{equation*}
CUU^{\ast }CU^{\ast }=CCU^{\ast }=U^{\ast }\text{,}
\end{equation*}

and%
\begin{equation*}
U^{\ast }CU^{\ast }UC=U^{\ast }CC=U^{\ast }\text{.}
\end{equation*}

Thus,%
\begin{equation*}
CUU^{\ast }CU^{\ast }=U^{\ast }CU^{\ast }UC\text{.}
\end{equation*}

Hence, $U$ is a $2$-$C$-normal operator.

$(ii)$ Notice that $U^{n}(U^{n})^{\ast }=U^{n}(U^{\ast })^{n}=UU...%
\underbrace{UU^{\ast }}...U^{\ast }U^{\ast }=I$. A similar method in the
proof $(i)$, we can obtain the desired result.
\end{proof}

\begin{theorem}
\label{TH3}Let $U\in \mathcal{B}(\mathcal{K})$ be unitary. Then, $S\in
\mathcal{B}(\mathcal{K})$ is a $2$-$C_{1}$-normal if and only if $USU^{\ast
} $ is $2$-$C$-normal such that $C_{1}=U^{\ast }CU$.
\end{theorem}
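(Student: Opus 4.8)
The plan is to pass the entire statement through conjugation by the unitary $U$, turning it into a conjugation-invariance property of a single antilinear operator. Before anything else I would record that $C_{1}=U^{\ast}CU$ really is a conjugation: it is antilinear, it satisfies $C_{1}^{2}=U^{\ast}CUU^{\ast}CU=U^{\ast}C^{2}U=I$, and $\langle C_{1}x,C_{1}y\rangle=\langle CUx,CUy\rangle=\langle Uy,Ux\rangle=\langle y,x\rangle$; this is exactly the $n=1$ case of the fact recalled in the preliminaries, so $2$-$C_{1}$-normality is well posed. Setting $T=USU^{\ast}$, I would then introduce the two antilinear operators $A:=SC_{1}$ and $B:=TC$ and note the conjugacy $B=UAU^{\ast}$, since $UAU^{\ast}=U(SU^{\ast}CU)U^{\ast}=USU^{\ast}C(UU^{\ast})=USU^{\ast}C=TC=B$.

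The core of the argument is to push the three quantities $B$, $B^{2}$ and $B^{\#}$ through this conjugacy. For the first two this is immediate from $U^{\ast}U=UU^{\ast}=I$: one gets $B^{2}=UAU^{\ast}UAU^{\ast}=UA^{2}U^{\ast}$. The delicate point is $B^{\#}$, because the antilinear adjoint does not distribute over composition like the ordinary adjoint. Here I would use the two elementary rules $(LX)^{\#}=X^{\#}L^{\ast}$ and $(XL)^{\#}=L^{\ast}X^{\#}$, valid for linear $L$ and antilinear $X$ (these specialize to the identities $(SC)^{\#}=CS^{\ast}$ and $(CS)^{\#}=S^{\ast}C$ already recorded). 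Applying them to $B=(USU^{\ast})C$ gives $B^{\#}=C(USU^{\ast})^{\ast}=CUS^{\ast}U^{\ast}$, and the same rules applied to $A^{\#}=U^{\ast}CUS^{\ast}$ yield $UA^{\#}U^{\ast}=CUS^{\ast}U^{\ast}$, so that $B^{\#}=UA^{\#}U^{\ast}$ as well.

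With the three identities $B=UAU^{\ast}$, $B^{2}=UA^{2}U^{\ast}$, $B^{\#}=UA^{\#}U^{\ast}$ in hand, the conclusion is pure bookkeeping: inserting them and cancelling the interior $U^{\ast}U=I$ gives $B^{2}B^{\#}=UA^{2}A^{\#}U^{\ast}$ and $B^{\#}B^{2}=UA^{\#}A^{2}U^{\ast}$. Since $U$ is invertible, $B^{2}B^{\#}=B^{\#}B^{2}$ holds if and only if $A^{2}A^{\#}=A^{\#}A^{2}$. Unwinding the definitions, the left equality says precisely that $T=USU^{\ast}$ is $2$-$C$-normal, while the right one says precisely that $S$ is $2$-$C_{1}$-normal, which is the desired equivalence. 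I expect the only real obstacle to be the correct evaluation of $B^{\#}$; once the pre- and post-composition rules for $\#$ are applied in the right order the remaining computation is routine. A less transparent alternative would be to run everything through the characterization $CSS^{\ast}CS^{\ast}=S^{\ast}CS^{\ast}SC$ of Theorem \ref{TH1}, but the direct conjugacy route above is cleaner.
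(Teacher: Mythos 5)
Your proof is correct, and it takes a genuinely different route from the paper's. The paper proves Theorem \ref{TH3} through the characterization of Theorem \ref{TH1}: it writes out $C(USU^{\ast})(USU^{\ast})^{\ast}C(USU^{\ast})^{\ast}=(USU^{\ast})^{\ast}C(USU^{\ast})^{\ast}(USU^{\ast})C$, expands, inserts factors of $UU^{\ast}=I$ so that every occurrence of $U^{\ast}CU$ can be read as $C_{1}$, and cancels the outer $U$, $U^{\ast}$ to reach $C_{1}SS^{\ast}C_{1}S^{\ast}=S^{\ast}C_{1}S^{\ast}SC_{1}$ --- exactly the ``less transparent alternative'' you mention at the end. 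You instead work straight from the definition by exhibiting the unitary conjugacy $TC=U\,(SC_{1})\,U^{\ast}$ for $T=USU^{\ast}$ and transporting the antilinear adjoint through it. I checked your three identities and they all hold: $B=UAU^{\ast}$ and $B^{2}=UA^{2}U^{\ast}$ are immediate, and your composition rules $(LX)^{\#}=X^{\#}L^{\ast}$ and $(XL)^{\#}=L^{\ast}X^{\#}$ for linear $L$ and antilinear $X$ are correct (they follow directly from $\langle LXx,y\rangle=\overline{\langle x,X^{\#}L^{\ast}y\rangle}$ and specialize to the identities $(SC)^{\#}=CS^{\ast}$, $(CS)^{\#}=S^{\ast}C$ recorded in the preliminaries), giving $B^{\#}=CUS^{\ast}U^{\ast}=UA^{\#}U^{\ast}$; the equivalence $B^{2}B^{\#}=B^{\#}B^{2}\Leftrightarrow A^{2}A^{\#}=A^{\#}A^{2}$ is then pure cancellation of $U^{\ast}U=I$. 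What your route buys: it makes the reason for the theorem conceptually transparent ($2$-$C$-normality is a property of the single antilinear operator $SC$, invariant under simultaneous unitary conjugation of $S$ and transport of $C$), it is independent of Theorem \ref{TH1}, and the identical three-line argument yields the paper's subsequent proposition on $U^{n}S(U^{\ast})^{n}$ and $U^{n}S^{m}(U^{\ast})^{n}$ with conjugation $(U^{\ast})^{n}CU^{n}$, as well as the obvious $n$-$C$-normal generalization, with no additional work. The paper's computation is shorter on the page but rests on the characterization and the somewhat ad hoc insertion of $UU^{\ast}$ factors.
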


\begin{proof}
It is easy to see that $C_{1}=U^{\ast }CU$ is a conjugation on $\mathcal{K}$%
. If $USU^{\ast }$ is a $2$-$C$-normal, then
\begin{eqnarray*}
&&C(USU^{\ast })(USU^{\ast })^{\ast }C(USU^{\ast })^{\ast } \\
&=&(USU^{\ast })^{\ast }C(USU^{\ast })^{\ast }(USU^{\ast })C \\
&\Leftrightarrow &CUSS^{\ast }U^{\ast }CUS^{\ast }U^{\ast }=US^{\ast
}U^{\ast }CUS^{\ast }SU^{\ast }C \\
&\Leftrightarrow &UU^{\ast }CUSS^{\ast }U^{\ast }CUS^{\ast }U^{\ast
}=US^{\ast }U^{\ast }CUS^{\ast }SU^{\ast }CUU^{\ast } \\
&\Leftrightarrow &UC_{1}SS^{\ast }C_{1}S^{\ast }U^{\ast }=US^{\ast
}C_{1}S^{\ast }SC_{1}U^{\ast } \\
&\Leftrightarrow &C_{1}SS^{\ast }C_{1}S^{\ast }=S^{\ast }C_{1}S^{\ast }SC_{1}%
\text{.}
\end{eqnarray*}

So, $S$ \ belongs to the class of $2$-$C_{1}$-normal operators exactly when $%
USU^{\ast }$ is $2$-$C$-normal.\ \ \ \ \ \ \ \ \ \ \ \ \ \ \ \ \ \ \ \ \ \ \
\ \ \ \ \ \ \ \ \ \ \ \ \ \ \ \ \ \ \ \ \ \ \ \ \ \ \ \ \ \ \ \ \ \ \ \ \ \
\ \ \ \ \ \ \ \ \ \ \ \ \ \ \ \ \ \ \ \ \ \ \ \ \ \
\end{proof}

\begin{proposition}
Let $S\in \mathcal{B}(\mathcal{K})$, and let $U\in \mathcal{B}(\mathcal{K})$
be a unitary. Thus, the next assertions are valid

$(i)$ $U^{n}S(U^{\ast })^{n}$ is $2$-$C$-normal is equivalent to $S$ is $2$-$%
(U^{\ast })^{n}CU^{n}$-normal, for all $n\in
\mathbb{N}
$.

$(ii)$ $U^{n}S^{m}(U^{\ast })^{n}$ is $2$-$C$-normal is equivalent to$\
S^{m} $ is $2$-$(U^{\ast })^{n}CU^{n}$-normal for all $n,m\in
\mathbb{N}
$.
\end{proposition}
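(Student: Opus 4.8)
The plan is to reduce both assertions to Theorem~\ref{TH3}, which already settles the case $n=1$. The key observation is that a power of a unitary is again unitary: if $U$ is unitary then $V:=U^{n}$ satisfies $V^{\ast}V=(U^{\ast})^{n}U^{n}=I=U^{n}(U^{\ast})^{n}=VV^{\ast}$, so $V\in\mathcal{B}(\mathcal{K})$ is unitary for every $n\in\mathbb{N}$. Moreover, as recalled in the preliminaries, $(U^{\ast})^{n}CU^{n}=V^{\ast}CV$ is again a conjugation on $\mathcal{K}$, so that the phrase ``$S$ is $2$-$(U^{\ast})^{n}CU^{n}$-normal'' is meaningful.

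For $(i)$, I would apply Theorem~\ref{TH3} verbatim with the unitary $V=U^{n}$ in place of $U$. That theorem asserts that $S$ is $2$-$(V^{\ast}CV)$-normal if and only if $VSV^{\ast}$ is $2$-$C$-normal. Substituting $V=U^{n}$, $V^{\ast}=(U^{\ast})^{n}$, $V^{\ast}CV=(U^{\ast})^{n}CU^{n}$, and $VSV^{\ast}=U^{n}S(U^{\ast})^{n}$ yields precisely the claimed equivalence.

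For $(ii)$, I would simply invoke part $(i)$ with the bounded operator $S^{m}\in\mathcal{B}(\mathcal{K})$ playing the role of $S$. Since part $(i)$ holds for an arbitrary element of $\mathcal{B}(\mathcal{K})$, replacing $S$ by $S^{m}$ gives that $U^{n}S^{m}(U^{\ast})^{n}$ is $2$-$C$-normal if and only if $S^{m}$ is $2$-$(U^{\ast})^{n}CU^{n}$-normal, which is exactly assertion $(ii)$.

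There is no genuine obstacle here; the entire content is the reduction, and the only point needing a word of justification is that $U^{n}$ is unitary and that $(U^{\ast})^{n}CU^{n}$ is a conjugation. If one instead preferred a self-contained argument, the alternative is to repeat the chain of equivalences used in the proof of Theorem~\ref{TH3}, applying the characterization $C_{1}SS^{\ast}C_{1}S^{\ast}=S^{\ast}C_{1}S^{\ast}SC_{1}$ of Theorem~\ref{TH1} with $C_{1}=(U^{\ast})^{n}CU^{n}$, and inserting the factors $U^{n}(U^{\ast})^{n}=I$ to pass between $U^{n}S(U^{\ast})^{n}$ and $S$. The only step requiring care along that direct route is tracking the antilinearity of $C$ when moving $U^{n}$ and $(U^{\ast})^{n}$ across the conjugation, which is exactly why the reduction to the already-proved case $n=1$ is the cleaner path.
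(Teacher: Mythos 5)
Your proposal is correct and takes essentially the same approach as the paper, whose entire proof is the remark that ``the argument follows the same line as in Theorem~\ref{TH3}'': you make this precise by applying Theorem~\ref{TH3} verbatim to the unitary $V=U^{n}$ (noting, as the preliminaries record, that $(U^{\ast})^{n}CU^{n}=V^{\ast}CV$ is again a conjugation) and then substituting $S^{m}$ for $S$ to obtain part $(ii)$. If anything, your black-box invocation of the theorem is a cleaner write-up of what the paper leaves implicit.
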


\begin{proof}
The argument follows the same line as in Theorem \ref{TH3}.
\end{proof}

\begin{proposition}
Let $C(z_{1},z_{2},z_{3})=(\overline{z_{3}},\overline{z_{2}},\overline{z_{1}}%
)$, and $a,b,$ $c$ be real numbers and $S=\left(
\begin{array}{ccc}
0 & a & b \\
0 & 0 & c \\
0 & 0 & 0%
\end{array}%
\right) $.Then, $T$ is $2$-$C$-normal when $b=0$ and $a,c$ arbitrary or $%
b\neq 0$ and $a=c$.
\end{proposition}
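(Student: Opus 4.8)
The plan is to invoke the characterization established in Theorem~\ref{TH1}: the operator $S$ is $2$-$C$-normal if and only if
\begin{equation*}
CSS^{\ast}CS^{\ast}=S^{\ast}CS^{\ast}SC.
\end{equation*}
Since $a,b,c$ are real, $S$ has real entries and $S^{\ast}=S^{T}$. Writing the conjugation as $Cz=J\overline{z}$, where $J$ is the anti-diagonal permutation matrix (the only nonzero entries are $1$'s on the anti-diagonal, and $J^{2}=I$), one checks at once that $CMC=JMJ$ for every real matrix $M$, because $J$ and $M$ are real and complex conjugation is an involution. Each displayed product contains exactly two copies of $C$ and is therefore linear; grouping the left-hand side as $(CSS^{\ast}C)S^{\ast}$ and the right-hand side as $S^{\ast}(CS^{\ast}SC)$, and noting that $SS^{\ast}$ and $S^{\ast}S$ are real, the $2$-$C$-normality condition collapses to the purely linear identity
\begin{equation*}
J(SS^{\ast})J\,S^{\ast}=S^{\ast}\,J(S^{\ast}S)J.
\end{equation*}

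First I would compute the two positive matrices $SS^{\ast}$ and $S^{\ast}S$ explicitly; each is supported on a $2\times2$ block, the first involving $a^{2}+b^{2}$, $bc$, $c^{2}$ and the second involving $a^{2}$, $ab$, $b^{2}+c^{2}$. Conjugation by $J$ merely reverses the order of rows and columns, that is $(JMJ)_{ij}=M_{4-i,\,4-j}$, so $J(SS^{\ast})J$ and $J(S^{\ast}S)J$ are obtained by this reversal. Multiplying by $S^{\ast}$ on the appropriate side then produces two $3\times3$ matrices whose only possibly nonzero entries sit in the last two rows and the first two columns.

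Comparing these two matrices entry by entry is the crux. The off-diagonal comparisons force $b^{2}(c-a)=0$, while the remaining comparisons force $b(c^{2}-a^{2})=b(c-a)(c+a)=0$. From this system I would read off the solution set: if $b=0$ both equations hold for arbitrary $a,c$; if $b\neq0$ the first equation already forces $c=a$, and this in turn satisfies the second. Conversely, any pair with $a=c$ (and any $b$) makes both sides coincide. This reproduces exactly the stated dichotomy, namely $b=0$ with $a,c$ arbitrary, or $b\neq0$ with $a=c$, which completes the proof.

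I expect the only real obstacle to be bookkeeping rather than conceptual difficulty: one must correctly handle the antilinearity of $C$, which the identity $CMC=JMJ$ dispatches cleanly, and avoid index errors when flipping matrices by $J$. Beyond that, the argument is a careful but entirely finite computation.
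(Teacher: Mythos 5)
Your proposal is correct and follows essentially the same route as the paper: both verify the criterion of Theorem~\ref{TH1} by explicit $3\times 3$ computation and arrive at the identical system $b^{2}(a-c)=0$, $b(a^{2}-c^{2})=0$, yielding the stated dichotomy. The only difference is presentational: you absorb the two conjugations into the flip matrix via $CMC=JMJ$ for real $M$, turning the antilinear identity into a purely linear one, whereas the paper tracks the antilinear action by applying each composition to a generic vector $z$ -- a tidier bookkeeping device, but the same argument in substance.
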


\begin{proof}
Let $z=\left(
\begin{array}{c}
z_{1} \\
z_{2} \\
z_{3}%
\end{array}%
\right) \in
\mathbb{C}
^{3}$, we have%
\begin{eqnarray*}
CSS^{\ast }CS^{\ast }(z) &=&CSS^{\ast }C\left(
\begin{array}{ccc}
0 & 0 & 0 \\
a & 0 & 0 \\
b & c & 0%
\end{array}%
\right) \left(
\begin{array}{c}
z_{1} \\
z_{2} \\
z_{3}%
\end{array}%
\right) \\
\ &=&C\left(
\begin{array}{ccc}
0 & a & b \\
0 & 0 & c \\
0 & 0 & 0%
\end{array}%
\right) \left(
\begin{array}{ccc}
0 & 0 & 0 \\
a & 0 & 0 \\
b & c & 0%
\end{array}%
\right) \left(
\begin{array}{c}
b\overline{z_{1}}+c\overline{z_{2}} \\
a\overline{z_{1}} \\
0%
\end{array}%
\right) \\
&=&C\left(
\begin{array}{c}
(ba^{2}+b^{3}+abc)\overline{z_{1}}+ca^{2}+cb^{2}\overline{z_{2}} \\
(ac^{2}+b^{2}c)\overline{z_{1}}+bc^{2}\overline{z_{2}} \\
0%
\end{array}%
\right) \allowbreak \\
&=&\left(
\begin{array}{c}
0 \\
(ac^{2}+b^{2}c)z_{1}+bc^{2}z_{2} \\
(ba^{2}+b^{3}+abc)z_{1}+(ca^{2}+cb^{2})z_{2}%
\end{array}%
\right) \text{.}
\end{eqnarray*}

From another point of view, we obtain
\begin{eqnarray*}
S^{\ast }CS^{\ast }SC(z) &=&S^{\ast }CS^{\ast }\left(
\begin{array}{ccc}
0 & a & b \\
0 & 0 & c \\
0 & 0 & 0%
\end{array}%
\right) \left(
\begin{array}{c}
\overline{z_{3}} \\
\overline{z_{2}} \\
\overline{z_{1}}%
\end{array}%
\right) \\
\ &=&S^{\ast }C\left(
\begin{array}{c}
0 \\
ab\overline{z_{1}}+a^{2}\overline{z_{2}} \\
(b^{2}+c^{2})\overline{z_{1}}+ba\overline{z_{2}}%
\end{array}%
\right) \\
\ &=&\allowbreak \left(
\begin{array}{c}
0 \\
\left( ab^{2}+ac^{2}\right) z_{1}+a^{2}bz_{2} \\
\left( abc+b^{3}+bc^{2}\right) z_{1}+(ab^{2}+ca^{2})z_{2}%
\end{array}%
\right) \allowbreak \ \text{.}
\end{eqnarray*}

Hence, $S$ becomes a $2$-$C$-normal whenever \
\begin{equation*}
\left\{
\begin{array}{c}
ac^{2}+b^{2}c=ab^{2}+ac^{2}\text{ } \\
\text{ }bc^{2}=a^{2}b \\
ba^{2}+b^{3}+abc=abc+b^{3}+bc^{2}\text{ } \\
\text{ }ca^{2}+cb^{2}=ab^{2}+ca^{2}%
\end{array}%
\right.
\end{equation*}%
then%
\begin{equation*}
\left\{
\begin{array}{c}
b^{2}(a-c)=0\text{ } \\
b(a^{2}-c^{2})=0%
\end{array}%
\right.
\end{equation*}%
equivalent if $b=0$ and $a,c$ arbitrary or $b\neq 0$ and $a=c$, then $S$ \
is $2$-$C$-normal in these cases. This means that $T$ takes one of the
following two forms:
\begin{equation*}
S=\left(
\begin{array}{ccc}
0 & a & b \\
0 & 0 & a \\
0 & 0 & 0%
\end{array}%
\right) \text{ or }S=\left(
\begin{array}{ccc}
0 & a & 0 \\
0 & 0 & b \\
0 & 0 & 0%
\end{array}%
\right) \text{,}
\end{equation*}%
where $a$ and $b$ are real numbers arbitrary.
\end{proof}

\begin{theorem}
\label{TH4} Let $S\in \mathcal{B}(\mathcal{K})$ such that $CSC=S$. Then, $S$
being $2$-$C$-normal$\ $is equivalent to $S$ is $2$-normal.
\end{theorem}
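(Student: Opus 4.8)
The plan is to reduce the $2$-$C$-normality condition, via the characterization already established in Theorem \ref{TH1}, to a commutation identity that is visibly equivalent to the definition of $2$-normality after taking adjoints. The hypothesis $CSC=S$ is the key structural ingredient, so the first step is to extract from it the two commutation relations $CS=SC$ and $CS^{\ast}=S^{\ast}C$.

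First I would observe that $CSC=S$ is equivalent to $CS=SC$: multiplying $CSC=S$ on the right by $C$ and using $C^{2}=I$ gives $CS=SC$. Taking adjoints in $CSC=S$ and recalling that $(CSC)^{\ast}=CS^{\ast}C$ yields $CS^{\ast}C=S^{\ast}$, hence likewise $CS^{\ast}=S^{\ast}C$. Thus both $S$ and $S^{\ast}$ commute with the conjugation $C$.

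Next I would invoke Theorem \ref{TH1}, which tells us that $S$ is $2$-$C$-normal exactly when
\begin{equation*}
CSS^{\ast}CS^{\ast}=S^{\ast}CS^{\ast}SC.
\end{equation*}
The core of the argument is to simplify each side using the two commutation relations together with $C^{2}=I$ and $CSC=S$. On the left, sliding $C$ past $S$ and then past $S^{\ast}$ collapses $CSS^{\ast}C$ to $SS^{\ast}$, leaving $S(S^{\ast})^{2}$. On the right, $CS^{\ast}=S^{\ast}C$ gives $S^{\ast}CS^{\ast}=(S^{\ast})^{2}C$, after which the factor $CSC$ reassembles to $S$, leaving $(S^{\ast})^{2}S$. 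Hence the $2$-$C$-normality condition is equivalent to $S(S^{\ast})^{2}=(S^{\ast})^{2}S$.

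Finally I would take adjoints: the identity $S(S^{\ast})^{2}=(S^{\ast})^{2}S$ is equivalent to $S^{2}S^{\ast}=S^{\ast}S^{2}$, which is precisely the defining relation of $2$-normality. Since passing to adjoints is an involution, the equivalence runs in both directions, completing the argument. I expect the only place requiring care is the bookkeeping in the middle step, namely tracking which commutation relation ($CS=SC$ or $CS^{\ast}=S^{\ast}C$) applies at each slide and correctly using the full hypothesis $CSC=S$, rather than merely $C^{2}=I$, to collapse the right-hand side.
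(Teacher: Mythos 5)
Your proposal is correct and follows essentially the same route as the paper's proof: extract $CS=SC$ and $CS^{\ast}=S^{\ast}C$ from $CSC=S$, reduce the Theorem \ref{TH1} identity $CSS^{\ast}CS^{\ast}=S^{\ast}CS^{\ast}SC$ to $S(S^{\ast})^{2}=(S^{\ast})^{2}S$, and pass to adjoints to obtain $S^{2}S^{\ast}=S^{\ast}S^{2}$. If anything, your explicit remark that taking adjoints is an involution makes the two-way equivalence slightly more transparent than the paper's one-directional phrasing, but the argument is the same.
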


\begin{proof}
Suppose that $CSC=S$, i.e., $CS=SC$, as a consequence $\ CS^{\ast }=S^{\ast
}C$.\ Also, we can observe that%
\begin{equation*}
CSS^{\ast }CS^{\ast }=SCS^{\ast }CS^{\ast }=SS^{\ast }CCS^{\ast }=S(S^{\ast
})^{2}
\end{equation*}

and%
\begin{equation*}
S^{\ast }CS^{\ast }SC=S^{\ast }S^{\ast }CSC=S^{\ast }S^{\ast }SCC=(S^{\ast
})^{2}S\text{.}
\end{equation*}

If $S$ is a $2$-$C$-normal operator, then
\begin{equation*}
CST^{\ast }CS^{\ast }=S^{\ast }CS^{\ast }SC\text{.}
\end{equation*}

This implies that
\begin{equation*}
S(S^{\ast })^{2}=(S^{\ast })^{2}S\text{.}
\end{equation*}

Therefore, $S^{2}S^{\ast }=S^{\ast }S^{2}$. So, $S$ is a $2$-$C$-normal$\ $%
is equivalent to$S$ is $2$-normal operator.
\end{proof}

\begin{remark}
Consider $(\alpha _{i})_{i=1}^{k}\in
\mathbb{C}
$, where $k\in
\mathbb{N}
$, let $S$ be defined as
\begin{equation*}
S=\left(
\begin{array}{ccccccccc}
0 & \overline{\alpha _{1}} & 0 & .. & .. & .. & .. & .. & 0 \\
\alpha _{1} & 0 & \overline{\alpha _{2}} & 0 & .. & .. & .. & .. & .. \\
0 & \alpha _{2} & .. & .. & .. & .. & .. & .. & .. \\
.. & .. & .. & 0 & \overline{\alpha _{k}} & 0 & .. & .. & .. \\
.. & .. & .. & \alpha _{k} & 0 & \overline{\alpha _{k}} & .. & .. & .. \\
.. & .. & .. & .. & \alpha _{k} & 0 & .. & .. & .. \\
.. & .. & .. & .. & .. & .. & .. & \overline{\alpha _{2}} & 0 \\
.. & .. & .. & .. & .. & .. & \alpha _{2} & 0 & \overline{\alpha _{1}} \\
0 & .. & .. & .. & .. & .. & .. & \alpha _{1} & 0%
\end{array}%
\right) \text{.}
\end{equation*}
\end{remark}

Take $C$ to be a conjugation on $%
\mathbb{C}
^{2k+1}$given as
\begin{equation*}
C(z_{1},z_{2},...,z_{2k+1})=(\overline{z_{2k+1}},...,\overline{z_{2}},%
\overline{z_{1}})\text{.}
\end{equation*}

By direct computation we can show that $S=S^{\ast }$, so $S$ is $2$-normal
and $CSC=S$, then by using Theorem \ref{TH4}, we get $S$ is $2$-$C$-normal.

\begin{theorem}
\label{TH5}If $S$ $\in \mathcal{B}(\mathcal{K})$ is invertible with $S^{\ast
}=S^{-1}$, then
\end{theorem}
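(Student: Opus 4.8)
The plan is to apply the characterization obtained in Theorem \ref{TH1}, by which $S$ is $2$-$C$-normal exactly when
\begin{equation*}
CSS^{\ast }CS^{\ast }=S^{\ast }CS^{\ast }SC\text{.}
\end{equation*}
The hypothesis $S^{\ast }=S^{-1}$ with $S$ invertible supplies the two identities $S^{\ast }S=SS^{\ast }=I$, and together with $C^{2}=I$ these are the only ingredients I expect to need. Notably the conjugation $C$ enters only through $C^{2}=I$, so the same argument should deliver the conclusion for \emph{every} conjugation on $\mathcal{K}$.

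First I would reduce the left-hand side. Using associativity of composition to isolate the block $SS^{\ast }$ and replacing it by $I$, the product $CSS^{\ast }CS^{\ast }$ becomes $C\,C\,S^{\ast }$, and one use of $C^{2}=I$ collapses this to $S^{\ast }$. The right-hand side is handled symmetrically: isolating the central block $S^{\ast }S$ and replacing it by $I$ turns $S^{\ast }CS^{\ast }SC$ into $S^{\ast }C\,C$, which again becomes $S^{\ast }$ via $C^{2}=I$. Since both sides equal $S^{\ast }$, the identity of Theorem \ref{TH1} holds and $S$ is $2$-$C$-normal.

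I do not anticipate a genuine obstacle, as the computation is a short telescoping of inner factors; the only point demanding attention is the antilinearity of $C$, which causes no difficulty precisely because the two copies of $C$ become adjacent after the collapse and cancel through $C^{2}=I$, never having to be commuted past a scalar. It is worth recording the stronger fact visible from the same reduction: since $CS^{\ast }SC=C\,C=I=SS^{\ast }$, the operator $S$ is already $C$-normal, whence $2$-$C$-normal by the remark that every $C$-normal operator is $2$-$C$-normal. Thus the essential content is that an operator satisfying $S^{\ast }=S^{-1}$ is $C$-normal---and \emph{a fortiori} $2$-$C$-normal---with respect to every conjugation $C$, the particular choice of $C$ being immaterial to either reduction.
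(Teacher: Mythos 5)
Your proof is correct and takes essentially the same route as the paper: both verify the Theorem \ref{TH1} identity $CSS^{\ast}CS^{\ast}=S^{\ast}CS^{\ast}SC$ by collapsing the blocks $SS^{\ast}$ and $S^{\ast}S$ to $I$ and cancelling the two adjacent conjugations via $C^{2}=I$, so that each side reduces to $S^{\ast}$. Your closing observation that $CS^{\ast}SC=I=SS^{\ast}$, so $S$ is in fact $C$-normal for every conjugation $C$, is precisely what the paper means when it dismisses part $(ii)$ as obvious.
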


$(i)$ $S\ $ belongs to the class of $2$-$C$-normal operators.

$(ii)$ $S\ $ satisfies $C$-normality.

\begin{proof}
$(i)$ Since $S^{\ast }=S^{-1}$, then%
\begin{equation*}
CSS^{\ast }CS^{\ast }=\ CSS^{-1}CS^{\ast }=CCS^{\ast }=S^{\ast }
\end{equation*}

and
\begin{equation*}
S^{\ast }CS^{\ast }SC=\ S^{\ast }CS^{-1}SC=S^{\ast }CC=S^{\ast }\text{.}
\end{equation*}%
\

This implies that
\begin{equation*}
CSS^{\ast }CS^{\ast }=S^{\ast }CS^{\ast }SC\text{.}
\end{equation*}%
\ Therefore, $S\ $is a $2$-$C$-normal operator.

$(ii)$ Obvious.
\end{proof}

\begin{proposition}
\label{Pro1}Let $S\in \mathcal{B}(\mathcal{K})$ be an invertible, and $%
S^{\ast }=\lambda S^{-1}$ for some $\lambda \in
\mathbb{C}
$. Then

$(i)$ $S\ $belongs to the class of $\ 2$-$C$-normal operators.

$(ii)$ $S$ satisfies $C$-normality if and only if $\ \lambda =\overline{%
\lambda }$, i.e., $\lambda \in
\mathbb{R}
$.
\end{proposition}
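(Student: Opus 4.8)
The plan is to reduce everything to one algebraic fact extracted from the hypothesis: that $S^{\ast}S$ and $SS^{\ast}$ are both the \emph{same} scalar multiple of the identity. First I would note that invertibility of $S$ forces $\lambda\neq 0$. Multiplying the relation $S^{\ast}=\lambda S^{-1}$ on the right by $S$ yields $S^{\ast}S=\lambda I$, and multiplying it on the left by $S$ yields $SS^{\ast}=\lambda I$. Hence $S^{\ast}S=SS^{\ast}=\lambda I$, and from this point both assertions reduce to short computations that use only $C^{2}=I$ together with the antilinearity of $C$.

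For $(i)$ I would appeal to the characterization in Theorem \ref{TH1}, so that it suffices to check $CSS^{\ast}CS^{\ast}=S^{\ast}CS^{\ast}SC$. Substituting $SS^{\ast}=\lambda I$ on the left and $S^{\ast}S=\lambda I$ on the right, and using the key antilinear identity $C(\lambda I)C=\overline{\lambda}\,I$, the left-hand side becomes $\overline{\lambda}\,S^{\ast}$ and the right-hand side becomes $\overline{\lambda}\,S^{\ast}$ as well. Their equality is then immediate, proving that $S$ is $2$-$C$-normal regardless of the value of $\lambda$.

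For $(ii)$ I would test the $C$-normality condition $CS^{\ast}SC=SS^{\ast}$ directly. The left-hand side equals $C(\lambda I)C=\overline{\lambda}\,I$, while the right-hand side equals $\lambda I$; hence $C$-normality holds exactly when $\overline{\lambda}\,I=\lambda I$, i.e. when $\lambda=\overline{\lambda}$, that is $\lambda\in\mathbb{R}$.

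The only delicate step, and the one place where antilinearity must be handled carefully, is the identity $C(\lambda I)C=\overline{\lambda}\,I$: since $C$ is conjugate-linear, passing a scalar through a single factor $C$ conjugates it, and this is precisely what turns $\lambda$ into $\overline{\lambda}$ and makes $(ii)$ a genuine dichotomy rather than an automatic identity. I would also observe that the equation $S^{\ast}S=\lambda I$, being a positive self-adjoint operator equal to $\lambda I$, already constrains $\lambda$ to be a positive real, so one should be clear whether $(ii)$ is being asserted as a formal equivalence or as a statement carrying this hidden constraint.
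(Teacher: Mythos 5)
Your proof is correct and takes essentially the same route as the paper: both substitute $S^{\ast}=\lambda S^{-1}$ into the two sides of the $2$-$C$-normality identity of Theorem \ref{TH1}, use the antilinearity of $C$ to conjugate the scalar, and obtain $\overline{\lambda}\,S^{\ast}$ on each side, while for $(ii)$ both compare $SS^{\ast}=\lambda I$ with $CS^{\ast}SC=\overline{\lambda}\,I$. Your closing observation that positivity and invertibility of $S^{\ast}S=\lambda I$ already force $\lambda>0$ --- so that the equivalence in $(ii)$ is in fact automatic under the hypothesis --- is a valid and worthwhile point that the paper's proof does not record.
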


\begin{proof}
$(i)$ Since$\ S^{\ast }=\lambda S^{-1}$, then
\begin{equation*}
CSS^{\ast }CS^{\ast }=CS(\lambda S^{-1})CS^{\ast }=\overline{\lambda }%
CSS^{-1}CS^{\ast }=\overline{\lambda }S^{\ast }\text{.}
\end{equation*}%
On the other hand, we have%
\begin{equation*}
S^{\ast }CS^{\ast }SC=S^{\ast }C(\lambda S^{-1})SC=\overline{\lambda }%
S^{\ast }CS^{-1}SC=\overline{\lambda }S^{\ast }\text{.}
\end{equation*}

Thus,
\begin{equation*}
\ CSS^{\ast }CS^{\ast }=S^{\ast }CS^{\ast }SC\text{.}
\end{equation*}%
$\ $\ Then, $S$ is $2$-$C$-normal operator for $\lambda \in
\mathbb{C}
$.

$(ii)$ If $S^{\ast }=\lambda S^{-1}$, then we have
\begin{equation*}
SS^{\ast }=S(\lambda S^{-1})=\lambda SS^{-1}=\lambda \text{, }
\end{equation*}%
and
\begin{equation*}
CS^{\ast }SC=C(\lambda S^{-1})SC=\overline{\lambda }CS^{-1}SC=\overline{%
\lambda }CC=\overline{\lambda }\text{.}
\end{equation*}

\ This implies that $\ $%
\begin{equation*}
SS^{\ast }=CS^{\ast }SC\text{.}
\end{equation*}%
$\ $\ Therefore, $S$ \ belongs to the class $C$-normal if and only if $%
\lambda =\overline{\lambda }$.
\end{proof}

\begin{proposition}
\ Let $S\in \mathcal{B}(\mathcal{K})$ be with the polar decomposition $%
S=U\left\vert S\right\vert $, where $U$ is a unitary. Suppose $C$ and $J$
are conjugations on $\mathcal{K}$ such that $U=CJ$ . Then

$(i)$ $S\ $ is a $C$-normal if and only if \ $J\left\vert S\right\vert
^{2}J=\left\vert S\right\vert ^{2}$.

$(ii)$ $S\ $ is a $2$-$C$-normal if and only if \ $(J\left\vert S\right\vert
J)^{2}\left\vert S\right\vert =\left\vert S\right\vert (J\left\vert
S\right\vert J)^{2}$.
\end{proposition}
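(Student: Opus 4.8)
The plan is to reduce both equivalences to purely algebraic identities in $|S|$, $C$, and $J$ by substituting the polar decomposition and then collapsing the involutions $C^{2}=J^{2}=I$. First I would record the structural fact that drives the whole argument: since $C$ and $J$ are conjugations, the product $U=CJ$ is a linear unitary whose adjoint is $U^{\ast}=JC$. This follows from the antilinear adjoint rule $(AB)^{\ast}=B^{\#}A^{\#}$ for antilinear $A,B$ together with $C^{\#}=C$ and $J^{\#}=J$, and is confirmed by $UU^{\ast}=CJJC=I$ and $U^{\ast}U=JCCJ=I$. From $S=U|S|=CJ|S|$ I then read off $S^{\ast}=|S|U^{\ast}=|S|JC$, together with $S^{\ast}S=|S|^{2}$ and $SS^{\ast}=U|S|^{2}U^{\ast}=CJ|S|^{2}JC$.

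For part $(i)$, $C$-normality reads $CS^{\ast}SC=SS^{\ast}$, which in these terms becomes $C|S|^{2}C=CJ|S|^{2}JC$. Multiplying on the left and right by $C$ and using $C^{2}=I$ collapses this immediately to $|S|^{2}=J|S|^{2}J$, the claimed criterion; every step is reversible.

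For part $(ii)$ I would apply Theorem \ref{TH1}, so that $2$-$C$-normality is equivalent to $CSS^{\ast}CS^{\ast}=S^{\ast}CS^{\ast}SC$. Substituting $S=CJ|S|$ and $S^{\ast}=|S|JC$ into each side and repeatedly reducing $C^{2}$ and $J^{2}$ to the identity, the left-hand side simplifies to $J|S|^{2}J|S|JC$ and the right-hand side to $|S|J|S|^{2}C$. Cancelling the common trailing $C$ (multiply on the right by $C$) and then the outer $J$ (multiply on the right by $J$) reduces the equality to $J|S|^{2}J|S|=|S|J|S|^{2}J$. Since $(J|S|J)^{2}=J|S|J\,J|S|J=J|S|^{2}J$, this is precisely $(J|S|J)^{2}|S|=|S|(J|S|J)^{2}$, and as each manipulation uses only the involutions $C$ and $J$ the equivalence holds in both directions.

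The only genuinely delicate point is the bookkeeping of the antilinear factors, above all the identity $U^{\ast}=JC$: because $C$ and $J$ are antilinear, the naive $(CJ)^{\ast}=J^{\ast}C^{\ast}$ must be replaced by the antilinear adjoint rule, and a single misplaced conjugation would flip an operator between linear and antilinear and wreck the cancellation. Once $U^{\ast}=JC$ is in place and the order of the $C$'s and $J$'s is tracked faithfully, the rest of both parts is routine collapsing of $C^{2}$ and $J^{2}$, requiring no positivity or spectral information about $|S|$.
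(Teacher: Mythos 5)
Your proposal is correct and follows essentially the same route as the paper: both substitute $S=CJ|S|$, $S^{\ast}=|S|JC$ (using $U^{\ast}=JC$) into the $C$-normality condition and into the Theorem~\ref{TH1} criterion $CSS^{\ast}CS^{\ast}=S^{\ast}CS^{\ast}SC$, then collapse the involutions $C^{2}=J^{2}=I$ to reach $J|S|^{2}J=|S|^{2}$ and $(J|S|J)^{2}|S|=|S|(J|S|J)^{2}$ respectively. Your intermediate expressions $J|S|^{2}J|S|JC$ and $|S|J|S|^{2}C$ match the paper's exactly, and your explicit justification of $U^{\ast}=JC$ via the antilinear adjoint rule is a point the paper uses without comment.
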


\begin{proof}
$(i)$ Take$S$ to be $C$-normal, it follows that
\begin{eqnarray*}
CSS^{\ast }C &=&CU\left\vert S\right\vert ^{2}U^{\ast }C \\
&=&CCJ\left\vert S\right\vert ^{2}JCC \\
&=&J\left\vert S\right\vert ^{2}J\ \text{.}
\end{eqnarray*}

Then, $S^{\ast }S=CSS^{\ast }C$ \ if and only if \ $J\left\vert S\right\vert
^{2}J=\left\vert S\right\vert ^{2}$. So, $S$ is a $C$-normal equivalent to
satisfying $J\left\vert S\right\vert ^{2}J=\left\vert S\right\vert ^{2}$.

$(ii)$ We have
\begin{eqnarray*}
CSS^{\ast }CS^{\ast } &=&CCJ\left\vert S\right\vert ^{2}JCC\left\vert
S\right\vert U^{\ast } \\
\ &=&J\left\vert S\right\vert ^{2}J\left\vert S\right\vert U^{\ast } \\
&=&(J\left\vert S\right\vert J)^{2}\left\vert S\right\vert JC
\end{eqnarray*}%
and
\begin{eqnarray*}
S^{\ast }CS^{\ast }SC &=&\left\vert S\right\vert U^{\ast }C\left\vert
S\right\vert U^{\ast }U\left\vert S\right\vert C \\
&=&\left\vert S\right\vert JCC\left\vert S\right\vert ^{2}C \\
&=&\left\vert S\right\vert J\left\vert S\right\vert ^{2}C\text{.}
\end{eqnarray*}

This implies that
\begin{equation*}
CSS^{\ast }CS^{\ast }=S^{\ast }CS^{\ast }SC\ \Leftrightarrow (J\left\vert
S\right\vert J)^{2}\left\vert S\right\vert JC=\left\vert S\right\vert
J\left\vert S\right\vert ^{2}C\text{.}
\end{equation*}%
Then,%
\begin{eqnarray*}
&&(J\left\vert S\right\vert J)^{2}\left\vert S\right\vert JCCJ \\
&=&\left\vert S\right\vert J\left\vert S\right\vert ^{2}CCJ \\
&\Leftrightarrow &(J\left\vert S\right\vert J)^{2}\left\vert S\right\vert
=\left\vert S\right\vert J\left\vert S\right\vert ^{2}J \\
&\Leftrightarrow &(J\left\vert S\right\vert J)^{2}\left\vert S\right\vert
=\left\vert S\right\vert (J\left\vert S\right\vert J)^{2}\text{.}
\end{eqnarray*}

Consequently, $S$ is $2$-$C$-normal exactly in the case where $(J\left\vert
S\right\vert J)^{2}\left\vert S\right\vert =\left\vert S\right\vert
(J\left\vert S\right\vert J)^{2}$.
\end{proof}

\begin{remark}
\ Let $S=U\left\vert S\right\vert $ be the polar decomposition of $S$, with $%
U$ a unitary, then $U=CJ$, with $C$ and $J$ be conjugations on $\mathcal{K}$%
. Moreover, if $\ \left\vert S\right\vert J=J\left\vert S\right\vert $, so $%
S\ $is both $C$-normal and $2$-$C$-normal.
\end{remark}

\begin{proposition}
Consider $(X,\mu )$ a measure space, and $f\in L^{2}(X,\mu )$. Let $\varphi
\in L^{\infty }$, Suppose $T_{\varphi }$ is a multiplication operator on $%
L^{2}(X,\mu )$, $S_{\varphi }f=\varphi f$ . As a consequence
\end{proposition}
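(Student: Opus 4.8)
The plan is to reduce the assertion to the algebraic criterion of Theorem~\ref{TH1}: the operator $S_{\varphi}$ is $2$-$C$-normal exactly when $C S_{\varphi} S_{\varphi}^{\ast} C S_{\varphi}^{\ast} = S_{\varphi}^{\ast} C S_{\varphi}^{\ast} S_{\varphi} C$. Since every operator appearing here acts by a pointwise operation on $L^{2}(X,\mu)$, I expect both sides of this identity to collapse to multiplication by a single function, so that the entire proof reduces to verifying one scalar identity $\mu$-almost everywhere.

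First I would record the two structural facts that make multiplication operators tractable. From $\langle \varphi f, g\rangle = \langle f, \overline{\varphi}\,g\rangle$ the adjoint is again a multiplication operator, $S_{\varphi}^{\ast} = S_{\overline{\varphi}}$, and any two multiplication operators commute, $S_{\psi} S_{\chi} = S_{\psi\chi}$. The second ingredient is the way $C$ interacts with $S_{\varphi}$: being antilinear and acting essentially by conjugation of values, $C$ intertwines a multiplication operator with the multiplication operator whose symbol is the conjugated symbol, that is $C S_{\varphi} C = S_{\varphi^{\flat}}$ for a suitable symbol $\varphi^{\flat}$. This single intertwining relation is the engine of the proof, because each time $C$ is moved past a multiplication operator the symbol is conjugated, while $C^{2} = I$ eventually restores the original argument of $f$.

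Armed with these relations I would apply each side of the Theorem~\ref{TH1} identity to an arbitrary $f \in L^{2}(X,\mu)$ and commute every occurrence of $C$ to the right using the intertwining rule, collecting the accumulated symbols as I go. On both sides the two copies of $C$ pair off via $C^{2}=I$, the surviving factor acting on $f$ is a product of $\varphi$-type and $\overline{\varphi}$-type terms, and commutativity of the symbols shows the two products agree $\mu$-a.e. Consequently both sides equal multiplication by the same function applied to $f$, which gives the required equality and hence, by Theorem~\ref{TH1}, the $2$-$C$-normality of $S_{\varphi}$.

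There is no genuine analytic difficulty: the hypothesis $\varphi \in L^{\infty}$ guarantees boundedness of all the operators involved and legitimizes the pointwise manipulations $\mu$-a.e. The only point requiring care, and thus the sole obstacle, is the bookkeeping forced by the antilinearity of $C$; one must track exactly which factors are conjugated and at which argument each symbol is evaluated every time $C$ passes a multiplication operator, since a single misplaced conjugate would destroy the cancellation. Once the intertwining relation is applied consistently, however, the two sides match term by term.
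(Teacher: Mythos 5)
Your reduction to Theorem~\ref{TH1} followed by commuting $C$ past the multiplication operators is essentially the paper's own argument for the $2$-$C$-normality claims. The paper applies both sides of the identity $CS_{\varphi}S_{\varphi}^{\ast}CS_{\varphi}^{\ast}=S_{\varphi}^{\ast}CS_{\varphi}^{\ast}S_{\varphi}C$ to a function $f$ and computes pointwise, finding that in case $(ii)$ each side equals multiplication by $|\varphi(-x)|^{2}\,\overline{\varphi(x)}$; your intertwining relation $CS_{\varphi}C=S_{\varphi^{\flat}}$, with $\varphi^{\flat}(x)=\overline{\varphi(x)}$ for the conjugation $Cf(x)=\overline{f(x)}$ and $\varphi^{\flat}(x)=\overline{\varphi(-x)}$ for $Cf(x)=\overline{f(-x)}$, is a tidier packaging of that same computation, since commutativity of symbols collapses both sides to $S_{|\varphi^{\flat}|^{2}\overline{\varphi}}$. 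One organizational difference: for case $(i)$ the paper does no computation at all, citing \cite[Proposition 5.1]{PT} for $C$-normality and then invoking the implication that every $C$-normal operator is $2$-$C$-normal, whereas your argument treats both conjugations uniformly and directly, which is more self-contained.

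There is, however, one genuine omission. In case $(i)$ the proposition asserts that $S_{\varphi}$ is $C$-normal \emph{and} $2$-$C$-normal, and your proposal establishes only the latter. The repair is immediate with your own machinery --- for $Cf=\overline{f}$ one has $CS_{\varphi}^{\ast}S_{\varphi}C=S_{|\varphi|^{2}}=S_{\varphi}S_{\varphi}^{\ast}$, which is precisely the $C$-normality condition --- but as written that part of the statement is unproved. (A caveat you share with the paper rather than a defect of your proposal: in case $(ii)$ the formula $Cf(x)=\overline{f(-x)}$ tacitly requires $X$ to be symmetric under $x\mapsto -x$ with $\mu$ invariant under this reflection, since otherwise $C$ is not a conjugation on $L^{2}(X,\mu)$; neither you nor the authors make this explicit.)
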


$(i)$ If $Cf(x)=\overline{f(x)}$, then $S_{\varphi }$ belongs to the classes
$C$-normal and $2$-$C$-normal.

$(ii)$ If $Cf(x)=\overline{f(-x)}$, then $S_{\varphi }$ belongs to the class
$2$-$C$-normal.

\begin{proof}
$\left( i\right) $ According to \cite[Proposition 5.1]{PT}, $S_{\varphi }$
belongs to the class $C$-normal, therefore $S_{\varphi }$ is $2$-$C$-normal.

$(ii)$ Note that $S_{\varphi }^{\ast }=S_{\overline{\varphi }}$, then
\begin{eqnarray*}
CS_{\varphi }S_{\varphi }^{\ast }CS_{\varphi }^{\ast }(f(x)) &=&CS_{\varphi
}S_{\overline{\varphi }}CT_{\overline{\varphi }}(f(x)) \\
&=&CS_{\varphi }S_{\overline{\varphi }}C(\overline{\varphi (x)}f(x)) \\
\ &=&CS_{\varphi }S_{\overline{\varphi }}(\varphi (-x)\overline{f(-x)}) \\
&=&C\ \varphi (x)\overline{\varphi (x)}\varphi (-x)\overline{f(-x)} \\
&=&\overline{\varphi (-x)}\varphi (-x)\overline{\varphi (x)}f(x) \\
&=&\left\vert \varphi (-x)\right\vert ^{2}\overline{\varphi (x)}f(x)\text{.}
\end{eqnarray*}%
\ \ \ \ \ \ \ \ \ \ \ \ \ \ \ \ \ \ \ \ \ \ \ \ \ \

From another perspective, we obtain
\begin{eqnarray*}
S_{\varphi }^{\ast }CS_{\varphi }^{\ast }S_{\varphi }C(f(x)) &=&S_{\overline{%
\varphi }}CS_{\overline{\varphi }}S_{\varphi }\overline{f(-x)} \\
&=&S_{\overline{\varphi }}C\text{ }\overline{\varphi (x)}\varphi (x)%
\overline{f(-x)} \\
\ \ &=&S_{\overline{\varphi }}\varphi (-x)\text{ }\overline{\varphi (-x)}f(x)
\\
&=&\overline{\varphi (x)}\left\vert \varphi (-x)\right\vert ^{2}f(x) \\
&=&\left\vert \varphi (-x)\right\vert ^{2}\overline{\varphi (x)}f(x)\text{.}
\end{eqnarray*}

$\ \ \ \ \ \ \ \ \ \ \ \ \ \ \ \ \ \ \ \ \ \ \ \ \ \ \ $

Therefore, we obtain
\begin{equation*}
CS_{\varphi }S_{\varphi }^{\ast }CS_{\varphi }^{\ast }=S_{\varphi }^{\ast
}CS_{\varphi }^{\ast }S_{\varphi }C\text{.}
\end{equation*}%
\ This shows that $S_{\varphi }$ is $2$-$C$-normal operator.
\end{proof}

\begin{remark}
We know that from \cite{PT} any normal operator $S\in \mathcal{B}(\mathcal{K}%
)$ is unitarily equivalent to the multiplication operator $M_{\varphi }$,
meaning $M_{\varphi }=UTU^{\ast }$, for some unitary $U\in \mathcal{B}(%
\mathcal{K},L^{2}(X,%
{\mu}%
))$. Suppose $(UCU^{\ast })f(x)=\overline{f(x)}$. Consequently, $T$ is $C$%
-normal, so $T$ is $2$-$C$-normal.
\end{remark}

\begin{theorem}
\label{TH6}Suppose $S$ $\in $ $\mathcal{B}(\mathcal{K})$ is an invertible.
Whenever, $S^{\ast }(C\left\vert S^{\ast }\right\vert C)=(C\left\vert
S^{\ast }\right\vert C)S^{\ast }$, one has that $S$ is normal if and only if
it is $2$-$C$-normal.
\end{theorem}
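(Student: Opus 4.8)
The plan is to reduce everything to commutation relations among the positive operators $N:=SS^{\ast}$ and $M:=S^{\ast}S$, for which normality is simply the equality $N=M$. Both the $2$-$C$-normality condition and the standing hypothesis can be massaged into statements about how $S^{\ast}$ interacts with $CNC$ and $CMC$, and then the equivalence becomes a short cancellation.

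First I would invoke Theorem \ref{TH1}, which says that $S$ is $2$-$C$-normal exactly when $CSS^{\ast}CS^{\ast}=S^{\ast}CS^{\ast}SC$. Grouping the factors and using $SS^{\ast}=N$, $S^{\ast}S=M$, I would rewrite this identity as
\[
(CNC)\,S^{\ast}=S^{\ast}\,(CMC),
\]
since $CSS^{\ast}CS^{\ast}=C(SS^{\ast})C\,S^{\ast}=(CNC)S^{\ast}$ and $S^{\ast}CS^{\ast}SC=S^{\ast}C(S^{\ast}S)C=S^{\ast}(CMC)$. The point of this arrangement is that the two positive parts $N$ and $M$ now sit on opposite sides of $S^{\ast}$.

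Next I would process the hypothesis $S^{\ast}(C\left\vert S^{\ast}\right\vert C)=(C\left\vert S^{\ast}\right\vert C)S^{\ast}$. Because $\left\vert S^{\ast}\right\vert=(SS^{\ast})^{1/2}$ and $C^{2}=I$, one has $(C\left\vert S^{\ast}\right\vert C)^{2}=C\left\vert S^{\ast}\right\vert^{2}C=C(SS^{\ast})C=CNC$, so squaring the commutation relation gives
\[
S^{\ast}(CNC)=(CNC)S^{\ast}.
\]
With these two facts in hand the equivalence follows at once. If $S$ is $2$-$C$-normal, I would combine $(CNC)S^{\ast}=S^{\ast}(CMC)$ with $(CNC)S^{\ast}=S^{\ast}(CNC)$ to get $S^{\ast}(CNC)=S^{\ast}(CMC)$; cancelling the invertible $S^{\ast}$ on the left and then conjugating by $C$ (using $C^{2}=I$) yields $N=M$, i.e.\ $S$ is normal. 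Conversely, if $S$ is normal then $M=N$, and the reformulated identity $(CNC)S^{\ast}=S^{\ast}(CMC)$ collapses precisely to the hypothesis $S^{\ast}(CNC)=(CNC)S^{\ast}$, so $S$ is $2$-$C$-normal.

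The main obstacle is organizational rather than computational: the decisive moves are (a) arranging the Theorem \ref{TH1} identity as $(CNC)S^{\ast}=S^{\ast}(CMC)$ so that $N$ and $M$ are separated by $S^{\ast}$, and (b) squaring the hypothesis to trade commutation with $C\left\vert S^{\ast}\right\vert C$ for commutation with $CNC$, which matches exactly one side of that identity. After this the only analytic input is the invertibility of $S$, used a single time to cancel $S^{\ast}$ in the forward direction; the backward implication requires no invertibility at all.
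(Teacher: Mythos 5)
Your proposal is correct and takes essentially the same route as the paper's proof: both square the commutation hypothesis to obtain $S^{\ast}(CSS^{\ast}C)=(CSS^{\ast}C)S^{\ast}$, express $2$-$C$-normality via the Theorem \ref{TH1} identity $CSS^{\ast}CS^{\ast}=S^{\ast}CS^{\ast}SC$, and then cancel the invertible $S^{\ast}$ and conjugate by $C$ to reach $SS^{\ast}=S^{\ast}S$. Your $N=SS^{\ast}$, $M=S^{\ast}S$ packaging and the observation that invertibility is used only in the direction from $2$-$C$-normal to normal are just a cleaner presentation of the paper's computation, not a different argument.
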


\begin{proof}
Taking $S$ to be normal, we have%
\begin{equation*}
S^{\ast }(C\left\vert S^{\ast }\right\vert C)=(C\left\vert S^{\ast
}\right\vert C)S^{\ast }
\end{equation*}%
\begin{eqnarray*}
&\Longrightarrow &S^{\ast }(C\left\vert S^{\ast }\right\vert
C)^{2}=(C\left\vert S^{\ast }\right\vert C)^{2}S^{\ast } \\
&\Longrightarrow &S^{\ast }C\left\vert S^{\ast }\right\vert
^{2}C=C\left\vert S^{\ast }\right\vert ^{2}CS^{\ast } \\
&\Longrightarrow &S^{\ast }CSS^{\ast }C=CSS^{\ast }CS^{\ast } \\
&\Longrightarrow &S^{\ast }CS^{\ast }SC=CSS^{\ast }CS^{\ast }\text{.}
\end{eqnarray*}

Therefore, $S$ is $2$-$C$-normal.

Assume that $S$ is a $2$-$C$-normal. Hence
\begin{equation*}
S^{\ast }CS^{\ast }SC=CSS^{\ast }CS^{\ast }\
\end{equation*}%
\begin{eqnarray*}
&\Longrightarrow &S^{\ast }C\left\vert S\right\vert ^{2}C=C\left\vert
S^{\ast }\right\vert ^{2}CS^{\ast } \\
\ \ &\Longrightarrow &S^{\ast }C\left\vert S\right\vert ^{2}C=S^{\ast
}C\left\vert S^{\ast }\right\vert ^{2}C \\
\ &\Longrightarrow &(S^{\ast })^{-1}(S^{\ast }C\left\vert S\right\vert
^{2}C)=(S^{\ast })^{-1}(S^{\ast }C\left\vert S^{\ast }\right\vert ^{2}C) \\
&\Longrightarrow &C\left\vert S\right\vert ^{2}C=C\left\vert S^{\ast
}\right\vert ^{2}C \\
\ &\Longrightarrow &\left\vert S\right\vert ^{2}=\left\vert S^{\ast
}\right\vert ^{2} \\
&\Longrightarrow &S^{\ast }S=SS^{\ast }\text{.}
\end{eqnarray*}

Therefore, $S$ is normal.
\end{proof}

\begin{theorem}
\label{TH7}\ For an invertible operator $S$ $\in \mathcal{B}(\mathcal{K})$
which is $2$-$C$-normal . The following three properties are equivalent

$(1)$ $S^{\ast }$ is an isometric.

$(2)$ $S$ is an isometric.

$(3)$ $S$ is unitary.
\end{theorem}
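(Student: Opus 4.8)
The plan is to establish the chain of implications $(1)\Rightarrow(2)\Rightarrow(3)\Rightarrow(1)$, so that all three properties become equivalent. The single mechanism behind every step is the elementary fact that, for an \emph{invertible} operator, a one-sided inverse is automatically the genuine (two-sided) inverse $S^{-1}$; once a relation such as $S^{\ast}S=I$ or $SS^{\ast}=I$ is known, invertibility upgrades it to the full identity $S^{\ast}=S^{-1}$. I would therefore keep the invertibility hypothesis in the foreground throughout, treating each of $(1)$ and $(2)$ as a one-sided version of unitarity that invertibility promotes to the two-sided version $(3)$.

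For $(1)\Rightarrow(2)$, assume $S^{\ast}$ is isometric, i.e.\ $SS^{\ast}=I$. Multiplying on the left by $S^{-1}$ gives $S^{\ast}=S^{-1}$, hence $S^{\ast}S=S^{-1}S=I$ and $S$ is isometric. For $(2)\Rightarrow(3)$, assume $S$ is isometric, i.e.\ $S^{\ast}S=I$; now multiplying on the right by $S^{-1}$ yields $S^{\ast}=S^{-1}$, so that $SS^{\ast}=I$ as well, and together with $S^{\ast}S=I$ this says $S$ is unitary. Finally $(3)\Rightarrow(1)$ is immediate: if $S$ is unitary then $SS^{\ast}=I$, which is precisely the statement that $(S^{\ast})^{\ast}S^{\ast}=I$, i.e.\ that $S^{\ast}$ is isometric. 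Closing the cycle yields the equivalence of all three properties.

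I do not expect a genuine obstacle in this argument, since it reduces to the standard observation that an invertible isometry (equivalently, an invertible co-isometry) is unitary. The only point deserving explicit mention is that invertibility is exactly what converts the one-sided relation into two-sidedness; without it the three properties would generally differ (for instance, the unilateral shift is isometric but neither invertible nor unitary). It is also worth recording that the $2$-$C$-normality hypothesis does not actually enter this particular equivalence, which holds for every invertible $S\in\mathcal{B}(\mathcal{K})$; the hypothesis is retained here only to keep the statement within the paper's running framework.
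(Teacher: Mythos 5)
Your proof is correct, but it takes a genuinely different route from the paper's. The paper proves $(1)\Rightarrow(2)$ and $(2)\Rightarrow(3)$ by substituting the one-sided relation ($SS^{\ast}=I$, respectively $S^{\ast}S=I$) into the $2$-$C$-normality identity $CSS^{\ast}CS^{\ast}=S^{\ast}CS^{\ast}SC$ of Theorem \ref{TH1}: one side collapses to $S^{\ast}$, the factor $S^{\ast}$ is then cancelled using $(S^{\ast})^{-1}$, and conjugating by $C$ recovers the missing relation $S^{\ast}S=I$ (respectively $SS^{\ast}=I$). You bypass the $2$-$C$-normality identity entirely and use only the algebra of invertibility ($SS^{\ast}=I$ forces $S^{\ast}=S^{-1}$, and symmetrically), and your closing remark is accurate: for the statement as given, the $2$-$C$-normality hypothesis is logically redundant, so your argument is shorter and strictly more general in that it covers every invertible operator. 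What the paper's route buys --- though the authors do not exploit it --- is that the cancellation of $S^{\ast}$ does not actually require invertibility: in $(1)\Rightarrow(2)$ one reaches $S^{\ast}(CS^{\ast}SC-I)=0$, and $S^{\ast}$ is injective because $SS^{\ast}=I$; in $(2)\Rightarrow(3)$ one reaches $(CSS^{\ast}C-I)S^{\ast}=0$, and $S^{\ast}$ is surjective because $S^{\ast}S=I$. A sharpened form of the paper's argument therefore shows that every $2$-$C$-normal isometry (or co-isometry) is already unitary, with no invertibility assumption --- a statement that genuinely uses $2$-$C$-normality and fails for general isometries, exactly as your unilateral-shift example illustrates. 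In short, the two hypotheses are individually sufficient: you discard $2$-$C$-normality and keep invertibility, while the paper's method, pushed slightly further than the paper takes it, would discard invertibility and keep $2$-$C$-normality.
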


\begin{proof}
$(1)\Longrightarrow (2)$ Assume that $S^{\ast }$ is an isometry, then $%
SS^{\ast }=I$. Since $S$ belongs to the class $2$-$C$-normal, then
\begin{equation*}
S^{\ast }CS^{\ast }SC=CSS^{\ast }CS^{\ast }\text{.}
\end{equation*}%
So,
\begin{eqnarray*}
\left. S^{\ast }CS^{\ast }SC=CICS^{\ast }\right. &\Longrightarrow &S^{\ast
}CS^{\ast }SC=S^{\ast } \\
&\Longrightarrow &S^{\ast }(CS^{\ast }SC-I)=0 \\
&\Longrightarrow &(S^{\ast })^{-1}S^{\ast }(CS^{\ast }SC-I)=0 \\
&\Longrightarrow &CS^{\ast }SC=I \\
&\Longrightarrow &CCS^{\ast }SCC=CIC \\
&\Longrightarrow &S^{\ast }S=I\text{.}
\end{eqnarray*}%
This implies that $S$ is an isometry.

$(2)\Longrightarrow (3)$ Assume that $S$ is an isometry then, $S^{\ast }S=I$
. We have $S$ is $2$-$C$-normal. So
\begin{eqnarray*}
\left. S^{\ast }CS^{\ast }SC=CSS^{\ast }CT^{\ast }\right. &\Longrightarrow
&S^{\ast }CIC=CSS^{\ast }CS^{\ast } \\
&\Longrightarrow &S^{\ast }=CSS^{\ast }CS^{\ast } \\
&\Longrightarrow &(CSS^{\ast }C-I)S^{\ast }=0 \\
&\Longrightarrow &(CSS^{\ast }C-I)S^{\ast }(S^{\ast })^{-1}=0 \\
&\Longrightarrow &CSS^{\ast }C=I \\
&\Longrightarrow &CCSS^{\ast }CC=CIC \\
&\Longrightarrow &SS^{\ast }=I\text{.}
\end{eqnarray*}%
This gives $S^{\ast }S=SS^{\ast }=I$. Hence $S$ is a unitary.

$(3)\Longrightarrow (1)$ If $S$ is a unitary operator then $S^{\ast
}S=SS^{\ast }=I$ . So, $S^{\ast }$ is an isometry.
\end{proof}

\begin{theorem}
\label{TH8}Let $T,S$ $\in \ \mathcal{B}(\mathcal{K})$ such that $T=CSC$. Then

$(a)$ $T$ is $C$-normal $\Leftrightarrow S$ is $C$-normal.

$(b)$ $T$ is $2$-$C$-normal $\Leftrightarrow S$ is $2$-$C$-normal.
\end{theorem}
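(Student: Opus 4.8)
The plan is to exploit the symmetry induced by the hypothesis $T=CSC$. Since $C^{2}=I$, the relation $T=CSC$ is equivalent to $S=CTC$, so the roles of $S$ and $T$ are interchangeable. The key observation is that the map $\Phi\colon X\mapsto CXC$ on $\mathcal{B}(\mathcal{K})$ is an involution (because $\Phi(\Phi(X))=C(CXC)C=X$) and is multiplicative (because inserting $C^{2}=I$ between factors gives $\Phi(XY)=CXYC=(CXC)(CYC)=\Phi(X)\Phi(Y)$). Moreover $\Phi(C)=CCC=C$, $\Phi(S)=CSC=T$, and, by the identity $(CSC)^{\ast}=CS^{\ast}C$ recorded in the preliminaries, $\Phi(S^{\ast})=CS^{\ast}C=T^{\ast}$. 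Thus $\Phi$ is a bijection that fixes $C$ and interchanges $S\leftrightarrow T$ and $S^{\ast}\leftrightarrow T^{\ast}$.

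Both $C$-normality and $2$-$C$-normality are equalities of \emph{words} in the letters $C$, $S$, $S^{\ast}$, so I would simply apply $\Phi$ to these equalities. For part $(a)$, the $C$-normality of $S$ is $CS^{\ast}SC=SS^{\ast}$; applying $\Phi$ letter by letter, using $\Phi(C)=C$, $\Phi(S)=T$, $\Phi(S^{\ast})=T^{\ast}$, the left side becomes $CT^{\ast}TC$ and the right side becomes $TT^{\ast}$, which is precisely the $C$-normality of $T$. Since $\Phi$ is bijective, one relation holds if and only if the other does, giving $(a)$.

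For part $(b)$ I would invoke Theorem \ref{TH1}, by which an operator $R$ is $2$-$C$-normal exactly when $CRR^{\ast}CR^{\ast}=R^{\ast}CR^{\ast}RC$. Applying $\Phi$ to the $2$-$C$-normality relation for $S$, namely $CSS^{\ast}CS^{\ast}=S^{\ast}CS^{\ast}SC$, and again replacing each letter according to $\Phi(C)=C$, $\Phi(S)=T$, $\Phi(S^{\ast})=T^{\ast}$, yields $CTT^{\ast}CT^{\ast}=T^{\ast}CT^{\ast}TC$, which is exactly the $2$-$C$-normality of $T$. As $\Phi$ is a bijection, the two conditions are equivalent, establishing $(b)$. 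Alternatively, one may expand the condition for $T$ directly, using $T^{\ast}=CS^{\ast}C$ and $C^{2}=I$, into $SS^{\ast}CS^{\ast}C=CS^{\ast}CS^{\ast}S$ and then conjugate by $C$ to recover $CSS^{\ast}CS^{\ast}=S^{\ast}CS^{\ast}SC$.

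The arguments are essentially bookkeeping; the only thing demanding care is tracking the order of the non-commuting letters and inserting $C^{2}=I$ at the right places. The one external fact I rely on is the compatibility of $\Phi$ with adjoints, $(CSC)^{\ast}=CS^{\ast}C$, already noted in the preliminaries, so I anticipate no substantive obstacle.
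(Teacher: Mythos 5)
Your proposal is correct and takes essentially the same route as the paper: the paper's proof of both parts is exactly the direct substitution $T=CSC$, $T^{\ast}=CS^{\ast}C$ followed by cancelling $C^{2}=I$ and conjugating the resulting identity by $C$, which your multiplicative involution $\Phi(X)=CXC$ merely packages systematically. Indeed, the ``alternative'' direct expansion you sketch at the end (reducing the $T$-condition to $SS^{\ast}CS^{\ast}C=CS^{\ast}CS^{\ast}S$ and conjugating by $C$) is verbatim the paper's computation, via the same Theorem~\ref{TH1} characterization.
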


\begin{proof}
$(a)$ Assume that $T=CSC$. If $T$ is $C$-normal, then%
\begin{equation*}
CT^{\ast }TC=TT^{\ast }
\end{equation*}%
\begin{eqnarray*}
&\Leftrightarrow &C(CS^{\ast }C)(CSC)C=(CSC)(CS^{\ast }C) \\
&\Leftrightarrow &S^{\ast }S=CSS^{\ast }C\text{.}
\end{eqnarray*}

Therefore, $S$ belongs to the class $C$-normal.\

$(b)$ Suppose $T$ belongs to the class $2$-$C$-normal. Then%
\begin{equation*}
T^{\ast }CT^{\ast }TC=CTT^{\ast }CT^{\ast }
\end{equation*}%
\begin{eqnarray*}
&\Longleftrightarrow &(CS^{\ast }C)C(CS^{\ast }C)(CSC)C=C(CSC)(CS^{\ast
}C)C(CS^{\ast }C) \\
&\Longleftrightarrow &CS^{\ast }CS^{\ast }S=SS^{\ast }CS^{\ast }C \\
&\Longleftrightarrow &S^{\ast }CS^{\ast }SC=CSS^{\ast }CS^{\ast }\text{.}
\end{eqnarray*}

Therefore, $S$ is $2$-$C$-normal.
\end{proof}

\begin{corollary}
Let $S$ $\in $ $\mathcal{B}(\mathcal{K})$. Then
\end{corollary}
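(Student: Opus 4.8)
The corollary statement is truncated after ``Then,'' so I will prove what is, given its position immediately after Theorem \ref{TH8} and the remark in the introduction about $C$-normality passing to adjoints, the natural reading: \emph{for $S\in\mathcal{B}(\mathcal{K})$, $S$ is $2$-$C$-normal if and only if $S^{\ast}$ is $2$-$C$-normal.} The plan is to discard the antilinear formulation entirely and work with the purely linear identity supplied by Theorem \ref{TH1}, after which the whole statement reduces to taking a Hilbert-space adjoint.

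First I would record the starting point: by Theorem \ref{TH1}, $S$ is $2$-$C$-normal exactly when $CSS^{\ast}CS^{\ast}=S^{\ast}CS^{\ast}SC$. The crucial preliminary point is that both sides here are bounded \emph{linear} operators, since each side contains the antilinear factor $C$ an even number of times; this is precisely what makes the ordinary adjoint a legitimate tool and lets me use $(AB)^{\ast}=B^{\ast}A^{\ast}$ freely.

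The key step is to isolate the two ``middle blocks'' $CSS^{\ast}C$ and $CS^{\ast}SC$ and observe that they are self-adjoint. Indeed, applying the identity $(CXC)^{\ast}=CX^{\ast}C$ recorded in the preliminaries with $X=SS^{\ast}$ and with $X=S^{\ast}S$, and using that $SS^{\ast}$ and $S^{\ast}S$ are self-adjoint, gives $(CSS^{\ast}C)^{\ast}=CSS^{\ast}C$ and $(CS^{\ast}SC)^{\ast}=CS^{\ast}SC$. Writing the $2$-$C$-normality relation as $(CSS^{\ast}C)\,S^{\ast}=S^{\ast}\,(CS^{\ast}SC)$ and taking adjoints then yields $S\,(CSS^{\ast}C)=(CS^{\ast}SC)\,S$, that is, $SCSS^{\ast}C=CS^{\ast}SCS$.

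Finally I would apply Theorem \ref{TH1} to $S^{\ast}$ in place of $S$: substituting $S\mapsto S^{\ast}$ (hence $S^{\ast}\mapsto S$) in the characterization shows that $S^{\ast}$ is $2$-$C$-normal precisely when $CS^{\ast}SCS=SCSS^{\ast}C$, which is exactly the identity produced in the previous paragraph. Since passing to the adjoint is an involution on the bounded linear operators, each implication is reversible, and the equivalence follows. The one genuine obstacle is the bookkeeping of antilinear factors: one must verify that the middle blocks are honestly self-adjoint linear operators before invoking $(AB)^{\ast}=B^{\ast}A^{\ast}$, and after that the argument is mechanical. If instead the intended corollary reads that $S$ is $2$-$C$-normal if and only if $CSC$ is $2$-$C$-normal, it is immediate from Theorem \ref{TH8} upon setting $T=CSC$ and noting $CTC=S$, so either admissible completion is covered by the same circle of ideas.
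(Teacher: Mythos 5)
The corollary as it actually continues in the paper (the items $(i)$ and $(ii)$ printed right after the truncated environment) is the second of your two candidate completions: whenever $S$ is $C$-normal (resp.\ $2$-$C$-normal), so is $CSC$. For that statement your closing sentence coincides exactly with the paper, which supplies no separate proof at all: it is immediate from Theorem \ref{TH8} with $T=CSC$, since $C^{2}=I$ gives $CTC=S$ and the theorem is stated as an equivalence. The bulk of your proposal instead proves a different statement, that $S$ is $2$-$C$-normal if and only if $S^{\ast}$ is, and that argument is correct as written: $CSS^{\ast}C$ and $CS^{\ast}SC$ are bounded linear operators (each contains the antilinear factor $C$ twice), they are self-adjoint by the identity $(CXC)^{\ast}=CX^{\ast}C$ applied to the positive operators $SS^{\ast}$ and $S^{\ast}S$, and taking adjoints in $(CSS^{\ast}C)S^{\ast}=S^{\ast}(CS^{\ast}SC)$ yields $S(CSS^{\ast}C)=(CS^{\ast}SC)S$, which is precisely the Theorem \ref{TH1} characterization with $S^{\ast}$ in place of $S$; involutivity of the adjoint makes the equivalence reversible. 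So there is no gap in what you wrote, but be aware that this adjoint-duality result is an extra proposition not appearing in the paper, not the corollary asked about; the intended content is the $CSC$ transfer, disposed of in one line from Theorem \ref{TH8} exactly as you do at the end.
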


$(i)$ Whenever $S$ is $C$-normal, the operator $CSC$ is also $C$-normal.

$(ii)$ Whenever $S$ is $2$-$C$-normal, the operator $CSC$ is also $2$-$C$%
-normal.

Denote by $N_{2.C}(\mathcal{K})$ the family of $2$-$C$-normal operators on $%
\mathcal{K}$.

\begin{proposition}
The class $N_{2.C}(\mathcal{K})$ is a norm closed in $\mathcal{B}(\mathcal{K}%
)$.

\begin{proof}
If $S\in \overline{N_{2.C}(\mathcal{K})}$, then one can find a sequence $%
\{S_{n}\}_{n}$ $\in $ $N_{2.C}(\mathcal{K})$ with
\begin{equation*}
\lim\limits_{n\rightarrow \infty }\Vert S_{n}-S\Vert =0\text{.}
\end{equation*}%
Therefore, we have
\begin{eqnarray*}
\left\Vert S^{\ast }CS^{\ast }SC-CSS^{\ast }CS^{\ast }\right\Vert &\leq
&\left\Vert S^{\ast }CS^{\ast }SC-S_{n}^{\ast }CS^{\ast }SC\right\Vert
+\left\Vert S_{n}^{\ast }CS^{\ast }SC-S_{n}^{\ast }CS_{n}^{\ast
}SC\right\Vert \\
&&+\left\Vert S_{n}^{\ast }CS_{n}^{\ast }SC-S_{n}^{\ast }CS_{n}^{\ast
}S_{n}C\right\Vert +\left\Vert S_{n}^{\ast }CS_{n}^{\ast
}S_{n}C-CS_{n}S_{n}^{\ast }CS_{n}^{\ast }\right\Vert \\
&&+\left\Vert CS_{n}S_{n}^{\ast }CS_{n}^{\ast }-CSS_{n}^{\ast }CS_{n}^{\ast
}\right\Vert +\left\Vert CSS_{n}^{\ast }CS_{n}^{\ast }-CSS^{\ast
}CS_{n}^{\ast }\right\Vert \\
&&+\left\Vert CSS^{\ast }CS_{n}^{\ast }-CSS^{\ast }CS^{\ast }\right\Vert \\
&\leq &\left\Vert (S^{\ast }-S_{n}^{\ast })CS^{\ast }SC\right\Vert
+\left\Vert S_{n}^{\ast }C(S^{\ast }-S_{n}^{\ast })SC\right\Vert \\
&&+\left\Vert S_{n}^{\ast }CS_{n}^{\ast }(S-S_{n})C\right\Vert +\left\Vert
S_{n}^{\ast }CS_{n}^{\ast }S_{n}C-CS_{n}S_{n}^{\ast }CS_{n}^{\ast
}\right\Vert \\
&&+\left\Vert C(S_{n}-S)S_{n}^{\ast }CS_{n}^{\ast }\right\Vert +\left\Vert
CS(S_{n}^{\ast }-S^{\ast })CS_{n}^{\ast }\right\Vert \\
&&+\left\Vert CSS^{\ast }C(S_{n}^{\ast }-S^{\ast })\right\Vert
\longrightarrow 0\text{.}
\end{eqnarray*}%
So,
\begin{equation*}
\left\Vert S^{\ast }CS^{\ast }SC-CSS^{\ast }CS^{\ast }\right\Vert =0\text{.}
\end{equation*}%
\ Therefore,
\begin{equation*}
S^{\ast }CS^{\ast }SC=CSS^{\ast }CS^{\ast }\text{.}
\end{equation*}%
Thus, $S\in N_{2.C}(\mathcal{K})$. Hence $N_{2.C}(\mathcal{K})$ is a norm
closed in $\mathcal{B}(\mathcal{K})$.
\end{proof}
\end{proposition}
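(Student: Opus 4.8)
The plan is to invoke the algebraic characterization established in Theorem \ref{TH1}, which states that $S \in \mathcal{B}(\mathcal{K})$ is $2$-$C$-normal if and only if
\[
S^{\ast}CS^{\ast}SC = CSS^{\ast}CS^{\ast}.
\]
Accordingly, I would introduce the defect operator $D(S) := S^{\ast}CS^{\ast}SC - CSS^{\ast}CS^{\ast}$, so that $N_{2.C}(\mathcal{K}) = \{\,S : D(S) = 0\,\} = D^{-1}(\{0\})$. Since the preimage of $\{0\}$ under a continuous map is closed, the whole proposition reduces to showing that $S \mapsto D(S)$ is norm-continuous on $\mathcal{B}(\mathcal{K})$. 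Observe also that each of the two words $S^{\ast}CS^{\ast}SC$ and $CSS^{\ast}CS^{\ast}$ contains two factors of the antilinear $C$, hence is a genuine bounded \emph{linear} operator, so $D(S)$ indeed lies in $\mathcal{B}(\mathcal{K})$.

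To realize the continuity concretely, I would take $S \in \overline{N_{2.C}(\mathcal{K})}$ and fix a sequence $\{S_n\} \subset N_{2.C}(\mathcal{K})$ with $\|S_n - S\| \to 0$. Two elementary facts drive the estimate: the adjoint is isometric, so $\|S_n^{\ast} - S^{\ast}\| = \|S_n - S\| \to 0$; and the conjugation preserves norms (by condition (iii), $\|Cx\| = \|x\|$), so that left or right multiplication by $C$ leaves operator norms unchanged. Moreover, a norm-convergent sequence is bounded, say $\sup_n \|S_n\| \le M < \infty$. The key step is then the telescoping estimate for $\|D(S)\| = \|D(S) - D(S_n)\|$, which is legitimate precisely because $D(S_n) = 0$ by the $2$-$C$-normality of $S_n$.

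I would interpolate between the word evaluated at $S$ and the same word evaluated at $S_n$ by replacing one noncommuting factor ($S$ or $S^{\ast}$) at a time, so that each successive difference has the shape (bounded operators)$\cdot (S - S_n) \cdot$(bounded operators) or the analogous form with $(S^{\ast} - S_n^{\ast})$. By submultiplicativity of the operator norm, the norm-invariance under $C$, and the uniform bound $M$, every such intermediate difference is dominated by a fixed constant times $\|S_n - S\|$, while the single middle term $\|S_n^{\ast}CS_n^{\ast}S_nC - CS_nS_n^{\ast}CS_n^{\ast}\| = \|D(S_n)\|$ vanishes identically. Letting $n \to \infty$ then forces $\|D(S)\| = 0$, i.e. $S \in N_{2.C}(\mathcal{K})$.

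The only delicate point is the bookkeeping in the telescoping: with four noncommuting factors per word, one must substitute exactly one factor at each stage and track whether the surviving factors are instances of $S_n$ (bounded by $M$) or of $S$ (bounded by $\|S\|$), so that each intermediate difference genuinely isolates a single $(S - S_n)$ or $(S^{\ast} - S_n^{\ast})$. No analytic obstacle arises beyond this; the statement is ultimately just the norm-continuity of a fixed polynomial word in $S$, $S^{\ast}$, and the isometric conjugation $C$.
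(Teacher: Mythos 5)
Your proposal is correct and follows essentially the same route as the paper: both prove closedness via the characterization $S^{\ast}CS^{\ast}SC = CSS^{\ast}CS^{\ast}$ from Theorem \ref{TH1} and a telescoping estimate that replaces one factor at a time, with the middle term killed by the $2$-$C$-normality of $S_n$. Your framing via the defect map $D(S)$ and its norm-continuity (using the isometry of the adjoint and of multiplication by $C$, plus the uniform bound on $\|S_n\|$) is merely a cleaner packaging of the paper's seven-term estimate, and if anything makes explicit the boundedness of $\{S_n\}$ that the paper uses silently.
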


\begin{theorem}
\label{TH9}For $S$ $\in $ $\mathcal{B}(\mathcal{K})$, assume that it is $2$-$%
C$-normal. Then

$(i)$ Suppose $S$ is injective. Then, $\ker (CS^{\ast }CS^{\ast })=\ker
(S^{\ast }CS^{\ast }SC)$.

$(ii)$ Suppose $S^{\ast }$ is injective. Then, $\ker (CSC)=\ker (CSS^{\ast
}CS^{\ast })$.
\end{theorem}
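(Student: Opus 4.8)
The plan is to reduce everything to the algebraic characterization of $2$-$C$-normality supplied by Theorem \ref{TH1}, namely
\begin{equation*}
CSS^{\ast}CS^{\ast}=S^{\ast}CS^{\ast}SC,
\end{equation*}
and then to manipulate kernels using only two elementary facts: first, that a conjugation $C$ is a bijection with $C^{2}=I$, so that prepending or deleting a leading factor $C$ never changes a kernel; and second, the standard Hilbert-space identities $\ker(SS^{\ast})=\ker(S^{\ast})$ and $\ker(S^{\ast}S)=\ker(S)$, which follow from $\langle SS^{\ast}u,u\rangle=\Vert S^{\ast}u\Vert^{2}$ and $\langle S^{\ast}Su,u\rangle=\Vert Su\Vert^{2}$.

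For part $(i)$ I would first observe, by applying the bijection $C$ on the left, that $\ker(CS^{\ast}CS^{\ast})=\ker(S^{\ast}CS^{\ast})$. Next, using Theorem \ref{TH1}, I would replace $S^{\ast}CS^{\ast}SC$ by $CSS^{\ast}CS^{\ast}$, so that $\ker(S^{\ast}CS^{\ast}SC)=\ker(CSS^{\ast}CS^{\ast})$. The key step is to factor $CSS^{\ast}CS^{\ast}=(CS)(S^{\ast}CS^{\ast})$ and to note that the injectivity of $S$ forces $CS$ to be injective, since $CSx=0$ is equivalent to $Sx=0$, hence to $x=0$; consequently the leading factor $CS$ can be cancelled and $\ker(CSS^{\ast}CS^{\ast})=\ker(S^{\ast}CS^{\ast})$. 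Comparing the two computations yields $\ker(CS^{\ast}CS^{\ast})=\ker(S^{\ast}CS^{\ast}SC)$.

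For part $(ii)$ I would again start from $\ker(CSS^{\ast}CS^{\ast})=\ker(S^{\ast}CS^{\ast}SC)$ via Theorem \ref{TH1}. Here the injectivity of $S^{\ast}$ lets me strip factors off from the left: if $S^{\ast}CS^{\ast}SCx=0$, then injectivity of $S^{\ast}$ gives $CS^{\ast}SCx=0$, and applying $C$ gives $S^{\ast}SCx=0$; the identity $\ker(S^{\ast}S)=\ker(S)$ then yields $SCx=0$. The reverse inclusion is immediate, so $\ker(S^{\ast}CS^{\ast}SC)=\ker(SC)$, and one final application of the bijection $C$ gives $\ker(SC)=\ker(CSC)$, which is the desired equality.

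The only genuine obstacle is bookkeeping rather than a conceptual difficulty: one must keep careful track of which maps are antilinear and must invoke the injectivity hypothesis exactly where a left factor is being cancelled ($CS$ in part $(i)$, $S^{\ast}$ in part $(ii)$), since without it the relevant operator could contribute a nontrivial kernel that breaks the cancellation. Everything else is forced by Theorem \ref{TH1} together with the bijectivity of $C$ and the two kernel identities.
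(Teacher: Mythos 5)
Your proposal is correct and takes essentially the same route as the paper: both arguments rest on the identity $CSS^{\ast}CS^{\ast}=S^{\ast}CS^{\ast}SC$ from Theorem \ref{TH1}, the bijectivity of $C$, and the injectivity hypothesis (on $S$ in $(i)$, on $S^{\ast}$ in $(ii)$) used precisely to cancel a leading factor. Your version merely repackages the paper's element-chasing double inclusions as kernel cancellations, making the common kernels $\ker(S^{\ast}CS^{\ast})$ in $(i)$ and $\ker(SC)$ in $(ii)$ explicit.
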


\begin{proof}
Assume that $S$ is a $2$-$C$-normal operator.

$(i)$ Let $x\in \ker (CS^{\ast }CS^{\ast })$, thus $CS^{\ast }CS^{\ast
}(x)=0 $, this implies that $S^{\ast }CS^{\ast }(x)=0$. Since $S$ is an
injective, it follows that $SS^{\ast }CS^{\ast }(x)=0$ which implies that $%
CSS^{\ast }CS^{\ast }(x)=0$. As $S$ is a $2$-$C$-normal then $S^{\ast
}CS^{\ast }SC(x)=0$, so $x\in \ker (S^{\ast }CS^{\ast }SC)$.

This implies that
\begin{equation*}
\ker (CS^{\ast }CS^{\ast })\subseteq \ker (S^{\ast }CS^{\ast }SC)\text{.}
\end{equation*}

Now, let $x\in \ker (S^{\ast }CS^{\ast }SC)$ then $S^{\ast }CS^{\ast
}SC(x)=0 $. Since $S$ is $2$-$C$-normal, so $CSS^{\ast }CS^{\ast }(x)=0$,
then $SS^{\ast }CS^{\ast }(x)=0$. Given that $S$ is injective one has $%
S^{\ast }CS^{\ast }(x)=0$, this implies that $CS^{\ast }CS^{\ast }(x)=0$
hence $x\in \ker (CS^{\ast }CS^{\ast })$. Thus
\begin{equation*}
\ker (S^{\ast }CS^{\ast }SC)\subseteq \ker (CS^{\ast }CS^{\ast })\text{.}
\end{equation*}

Then, $\ker (CS^{\ast }CS^{\ast })=\ker (S^{\ast }CS^{\ast }SC)$.

$(ii)$ Let $x\in \ker (CSC)$, then
\begin{eqnarray*}
\left. CSC(x)=0\right. &\Longrightarrow &SC(x)=0 \\
&\Longrightarrow &S^{\ast }SC(x)=0 \\
&\Longrightarrow &S^{\ast }CS^{\ast }SC(x)=0 \\
&\Longrightarrow &CSS^{\ast }CS^{\ast }(x)=0\text{.}
\end{eqnarray*}%
This means that $x\in \ker (CSS^{\ast }CS^{\ast })$. This implies $\ker
(CSC)\subseteq \ker (CSS^{\ast }CS^{\ast })$.

For every $x\in \ker (CSS^{\ast }CS^{\ast })$, since $S^{\ast }$is an
injective operator. It follows that
\begin{eqnarray*}
CSS^{\ast }CS^{\ast }(x) &=&0\  \\
&\Longrightarrow &S^{\ast }CS^{\ast }SC(x)=0 \\
&\Longrightarrow &CS^{\ast }SC(x)=0 \\
&\Longrightarrow &S^{\ast }SC(x)=0 \\
&\Longrightarrow &SC(x)=0 \\
&\Longrightarrow &CSC(x)=0
\end{eqnarray*}%
Thus $x$ belongs to $\ker (CSC)$. This implies that
\begin{equation*}
\ker (CSS^{\ast }CS^{\ast })\subseteq \ker (CSC)\text{.}
\end{equation*}%
Therefore,
\begin{equation*}
\ker (CSC)=\ker (SS^{\ast }CS^{\ast })\text{.}
\end{equation*}
\end{proof}

\begin{proposition}
Take $S\in \mathcal{B}(\mathcal{K})$ to be $2$-$C$-normal. Assume that $%
SS^{\ast }CS^{\ast }C$ or $CS^{\ast }CS^{\ast }S$ is bounded below. Then $S$
must be bounded below.
\end{proposition}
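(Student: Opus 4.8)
The plan is to reduce both alternative hypotheses to a single operator by exploiting the $2$-$C$-normality, and then to peel off a left factor using the elementary fact that a product of bounded linear operators which is bounded below forces its \emph{rightmost} factor to be bounded below.

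First I would recast the $2$-$C$-normality in the convenient form supplied by Theorem \ref{TH1}, namely $CSS^{\ast}CS^{\ast}=S^{\ast}CS^{\ast}SC$. Left-multiplying this identity by $C$ and using $C^{2}=I$ yields $SS^{\ast}CS^{\ast}=CS^{\ast}CS^{\ast}SC$; right-multiplying the result by $C$ and again using $C^{2}=I$ gives
\begin{equation*}
SS^{\ast}CS^{\ast}C=CS^{\ast}CS^{\ast}S\text{.}
\end{equation*}
Thus the two operators appearing in the statement actually coincide, so it suffices to treat the single assumption that $CS^{\ast}CS^{\ast}S$ is bounded below.

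Next I would record the standard observation that if $A,B\in\mathcal{B}(\mathcal{K})$ and $AB$ is bounded below, then $B$ is bounded below: from $\|ABx\|\leq\|A\|\,\|Bx\|$ together with $\|ABx\|\geq c\|x\|$ for some $c>0$, one obtains $\|Bx\|\geq (c/\|A\|)\|x\|$, where $\|A\|>0$ because $A\neq 0$ (otherwise $AB=0$ could not be bounded below on the nontrivial space $\mathcal{K}$). Here I would set $A=CS^{\ast}CS^{\ast}$ and $B=S$, so that $AB=CS^{\ast}CS^{\ast}S$; note that $A$ is a genuinely \emph{linear} bounded operator, since $C$ is an antilinear isometry occurring an even number of times in the composite, whence $\|A\|\leq\|S\|^{2}$. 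Applying the observation then shows that $S=B$ is bounded below, which is the assertion.

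The only genuine subtlety is the case in which the hypothesis is phrased through $SS^{\ast}CS^{\ast}C$, where $S$ appears as the \emph{left} factor and the naive peeling-off argument would instead yield information only about $S^{\ast}CS^{\ast}C$. This is exactly what the identity $SS^{\ast}CS^{\ast}C=CS^{\ast}CS^{\ast}S$ circumvents: $2$-$C$-normality repositions $S$ to the right, after which the factoring argument applies verbatim. I expect the two displayed manipulations with $C$ — and the bookkeeping confirming that the composite $CS^{\ast}CS^{\ast}$ is linear and bounded — to be the only points requiring care, the remainder being the routine product estimate above.
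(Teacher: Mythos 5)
Your proposal is correct and is essentially the paper's argument in direct rather than contrapositive form: the identity $SS^{\ast}CS^{\ast}C=CS^{\ast}CS^{\ast}S$ you derive from Theorem \ref{TH1} is exactly what the paper uses implicitly in its norm chain $\Vert SS^{\ast}CS^{\ast}Cx_{n}\Vert=\Vert CS^{\ast}CS^{\ast}SCCx_{n}\Vert$, and your peel-off lemma ($AB$ bounded below with $A=CS^{\ast}CS^{\ast}$ forces $B=S$ bounded below) is just the contrapositive of the paper's estimate $\Vert CS^{\ast}CS^{\ast}Sx_{n}\Vert\leq\Vert CS^{\ast}CS^{\ast}\Vert\,\Vert Sx_{n}\Vert$ applied to an approximate-point-spectrum sequence. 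Your version is, if anything, slightly cleaner, since it avoids sequences and makes the coincidence of the two hypothesized operators explicit.
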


\begin{proof}
By assuming $S$ fails to be bounded below, the point $0$ necessarily belongs
to $\sigma _{ap}(S)$. One can construct a sequence $(x_{n})_{n}$, where $%
\left\Vert x_{n}\right\Vert =1$ and $\left\Vert Sx_{n}\right\Vert
\longrightarrow 0$. So, we have
\begin{equation*}
\left\Vert CS^{\ast }CS^{\ast }Sx_{n}\right\Vert \leq \left\Vert CS^{\ast
}CS^{\ast }\right\Vert \left\Vert Sx_{n}\right\Vert \longrightarrow 0\text{,
}
\end{equation*}%
and
\begin{eqnarray*}
\left\Vert SS^{\ast }CS^{\ast }Cx_{n}\right\Vert &=&\left\Vert CS^{\ast
}CS^{\ast }SCCx_{n}\right\Vert \\
&=&\left\Vert CS^{\ast }CS^{\ast }Sx_{n}\right\Vert \\
&\leq &\left\Vert CS^{\ast }CS^{\ast }\right\Vert \left\Vert
Sx_{n}\right\Vert \longrightarrow 0\text{.}
\end{eqnarray*}

Since $\left\Vert Cx_{n}\right\Vert =\left\Vert x_{n}\right\Vert =1$, then
we get $0\in \sigma _{ap}(CS^{\ast }CS^{\ast }S)$ and $0\in \sigma
_{ap}(SS^{\ast }CS^{\ast }C)$.

Therefore, $CS^{\ast }CS^{\ast }S$ and $SS^{\ast }CS^{\ast }C$ are not
bounded below.
\end{proof}

\begin{proposition}
Suppose $S\in \mathcal{B}(\mathcal{K})$ is invertible and $2$-$C$-normal.
Then, $S$ is bounded below precisely when $S^{\ast }$ also a bounded below.
\end{proposition}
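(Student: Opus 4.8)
The plan is to rephrase ``bounded below'' through the approximate point spectrum (so that $T$ is bounded below exactly when $0\notin\sigma_{ap}(T)$) and to drive every argument by the $2$-$C$-normal identity $CSS^{\ast}CS^{\ast}=S^{\ast}CS^{\ast}SC$ supplied by Theorem \ref{TH1}. Two elementary facts will be used throughout: the conjugation is norm preserving, $\Vert Cx\Vert=\Vert x\Vert$, and, since $S$ is invertible, so is $S^{\ast}$, whence $S$, $S^{\ast}$ and any finite product built from $S$, $S^{\ast}$ and $C$ is bounded below by a strictly positive constant.

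For the implication ``$S$ bounded below $\Rightarrow$ $S^{\ast}$ bounded below'' I would argue by contradiction. If $S^{\ast}$ is not bounded below then $0\in\sigma_{ap}(S^{\ast})$, so there are unit vectors $x_{n}$ with $\Vert S^{\ast}x_{n}\Vert\to 0$. Substituting $x_{n}$ into the left-hand side of the identity and pushing this decay successively through the bounded factors $C,S^{\ast},S,C$ gives $CSS^{\ast}CS^{\ast}x_{n}\to 0$, hence the right-hand side yields $S^{\ast}CS^{\ast}SCx_{n}\to 0$. Writing $w_{n}=Cx_{n}$ (still unit vectors), this is $S^{\ast}CS^{\ast}(Sw_{n})\to 0$; since $S^{\ast}CS^{\ast}$ is bounded below we deduce $Sw_{n}\to 0$, contradicting the fact that $S$ is bounded below.

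For the converse I run the mirror-image computation. Assuming $0\in\sigma_{ap}(S)$ gives unit vectors $x_{n}$ with $\Vert Sx_{n}\Vert\to 0$; applying the identity to $Cx_{n}$ and using $C^{2}=I$ turns the right-hand side into $S^{\ast}CS^{\ast}Sx_{n}\to 0$, so the left-hand side gives $CSS^{\ast}CS^{\ast}(Cx_{n})\to 0$, that is $SS^{\ast}(CS^{\ast}Cx_{n})\to 0$ once the outer isometry $C$ is dropped. As $SS^{\ast}$ is bounded below this forces $CS^{\ast}Cx_{n}\to 0$, hence $S^{\ast}(Cx_{n})\to 0$ with $\Vert Cx_{n}\Vert=1$, contradicting that $S^{\ast}$ is bounded below.

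The intermediate norm estimates are routine; the one point that genuinely needs care, and the step I expect to be the main obstacle, is the justification at each stage that the composite operators I cancel ($S^{\ast}CS^{\ast}$ in the first half, $SS^{\ast}$ in the second) are bounded below. This is precisely where the invertibility hypothesis is used, and it is also what renders the antilinearity of $C$ harmless, since $C$ enters only as a norm-preserving invertible factor. I note in passing that invertibility already forces both $S$ and $S^{\ast}$ to be bounded below directly, via $\Vert Sx\Vert\ge\Vert S^{-1}\Vert^{-1}\Vert x\Vert$ and the analogous bound for $S^{\ast}$ coming from $(S^{\ast})^{-1}=(S^{-1})^{\ast}$; the sequential argument above is the variant that exhibits the role of $2$-$C$-normality explicitly.
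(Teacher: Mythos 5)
Your proof is correct and follows essentially the same route as the paper's: both directions argue by contrapositive, taking unit vectors witnessing $0\in\sigma_{ap}$, transporting the decay through the $2$-$C$-normal identity $CSS^{\ast}CS^{\ast}=S^{\ast}CS^{\ast}SC$ of Theorem \ref{TH1}, and cancelling the factors ($S^{\ast}CS^{\ast}$, respectively $SS^{\ast}$, with the isometric conjugation $C$ harmless) that invertibility makes bounded below --- exactly the computations $\Vert CS^{\ast}Cx_{n}\Vert=\Vert (SS^{\ast})^{-1}\cdots Sx_{n}\Vert\to 0$ and $\Vert CSCx_{n}\Vert\to 0$ carried out in the paper. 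Your closing parenthetical is in fact sharper than the paper itself: since $S$ is assumed invertible, $\Vert Sx\Vert\geq\Vert S^{-1}\Vert^{-1}\Vert x\Vert$ and the analogous bound for $S^{\ast}$ (via $(S^{\ast})^{-1}=(S^{-1})^{\ast}$) show that both $S$ and $S^{\ast}$ are automatically bounded below, so the stated equivalence holds vacuously and the $2$-$C$-normality hypothesis is never actually needed --- an observation the paper's proof does not make.
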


\begin{proof}
\bigskip Take $S$ \ to be a $2$-$C$-normal does not satisfy the
bounded-below property, one obtains $0\in \sigma _{ap}(S)$. It is possible
to find $(x_{n})_{n}$, where $\left\Vert x_{n}\right\Vert =1$ with $%
\left\Vert Sx_{n}\right\Vert \longrightarrow 0$. We have%
\begin{eqnarray*}
\left\Vert CS^{\ast }Cx_{n}\right\Vert &=&\left\Vert (S^{\ast
})^{-1}S^{-1}CCSS^{\ast }CS^{\ast }Cx_{n}\right\Vert \\
&=&\left\Vert (SS^{\ast })^{-1}CS^{\ast }CS^{\ast }Sx_{n}\right\Vert \\
&\leq &\left\Vert (SS^{\ast })^{-1}CS^{\ast }CS^{\ast }\right\Vert
\left\Vert Sx_{n}\right\Vert \longrightarrow 0\text{.}
\end{eqnarray*}%
Since $\left\Vert Cx_{n}\right\Vert =\left\Vert x_{n}\right\Vert =1$, then $%
0\in \sigma _{ap}(CS^{\ast }C)=\sigma _{ap}(S^{\ast })$ which that $S^{\ast
} $does not satisfy the bounded-below property.

Assume that $S^{\ast }$ is not bounded below. Then belongs to the
approximate point spectrum of $S^{\ast }$, so one can find a sequence $%
(x_{n})_{n}$ with $\left\Vert x_{n}\right\Vert =1$ such that $\left\Vert
S^{\ast }x_{n}\right\Vert \longrightarrow 0$. We have
\begin{eqnarray*}
\Vert CSCx_{n}\Vert &=&\Vert C(S^{\ast })^{-1}C(S^{\ast })^{-1}S^{\ast
}CS^{\ast }SCx_{n}\Vert \\
&=&\Vert C(S^{\ast })^{-1}C(S^{\ast })^{-1}CSS^{\ast }CS^{\ast }x_{n}\Vert \\
&\leq &\Vert C(S^{\ast })^{-1}C(S^{\ast })^{-1}CSS^{\ast }C\Vert \Vert
S^{\ast }x_{n}\Vert \longrightarrow 0\text{.}
\end{eqnarray*}%
Since $\Vert Cx_{n}\Vert =\Vert x_{n}\Vert =1$, then $0\in \sigma
_{ap}(CSC)=\sigma _{ap}(S)$ which that$\ S$ is not bounded below.
\end{proof}

\textbf{Data availability}

No data was used for the research described in the article.


\begin{thebibliography}{00}

\bibitem{ALZ} S.A. Alzuraiqi, A.B. Patel, On $n$-normal operators, Gen.
Math. Notes 1 (2), 61--73 (2010).

\bibitem{B} A. Brown, On a class of operators, Proc. Amer. Math. Soc. 4,
723-728 (1953).

\bibitem{C} M. Ch\={o}, J.E. Lee, K. Tanahashi, A. Uchiyama, Remarks on $n$%
-normal operators. Filomat 32(15), 5441--5451 (2018).

\bibitem{GAR} S.R. Garcia, M. Putinar. Complex symmetric operators and
applications. Trans. Amer. Math. Soc., 358:1285--1315 (2006).

\bibitem{GAR1} S.R. Garcia, M. Putinar, Complex symmetric operators and
applications II, Trans. Amer. Math. Soc. 359, 3913-3931(2007).

\bibitem{GAR2} S.R. Garcia, W.R. Wogen, Some new classes of complex
symmetric operators. Trans. Am. Math.Soc. 362, 6065-6077 (2010).

\bibitem{GU2} M. Guesba, M. Nadir, On $n$-power-hyponormal operators, Global
Journal of Pure and Applied Mathematics. 12 (1) 473-479 (2016).

\bibitem{GU} M. Guesba, On some numerical radius inequalities for normal
operators in Hilbert spaces, J. Interdiscip. Math, 25(2) 463-470 (2022).

\bibitem{JI} A.A.S. Jibril, On $2$-Normal Operators, Dirasat, 23 (2) (1996).

\bibitem{J} S. Jung, E. Ko and J. E. Lee, On complex symmetric operator
matrices, J. Math. Anal.Appl. 406, 373-385 (2013).

\bibitem{KO} E. Ko, J. E. Lee, M. Lee, On properties of $C$-normal
operators, Banach J. Math. Anal. 15 (65) (2021).

\bibitem{LLL} E. Ko1, J. E. Lee and M.J. Lee,{On properties of C-normal
operators II}, Ann. Funct. Anal. (2023) 14:29
https://doi.org/10.1007/s43034-022-00246-w

\bibitem{L} I. Lakehal, M. Guesba, Some results on $C$-normal operators,
Surveys in Mathematics and its Applications, 18, 13 -- 26 (2023).

\bibitem{PT} M. Ptak, K. Simik, A. Wicher, $C$-normal operators, Electron J
Linear Algebra. 36, 67-79 (2020).

\bibitem{WA} C. Wang, J. Zhao and S. Zhu, Remarks on the structure of $C$
-normal operators, Linear Multilinear Algebra 70(9), 1682-1696 (2022).

\bibitem{W} R. Whitley. Normal and quasinormal composition operators. Proc.
Amer. Math. Soc., 70:114--118 (1978).

\bibitem{Z} S. Zhu, C. G. Li, Complex symmetric weighted shifts, Trans.
Amer. Math. Soc. 365, 511--530 (2013).

\bibitem{Z2} S. Zhu, Complex symmetric triangular operators, Oper. Matrices
9, 365--381 (2015).
\end{thebibliography}
\end{document}